\setlist[enumerate]{label=\emph{(\roman*)}}
\newtheorem{theorem}{Theorem}[section]
\newtheorem{corollary}[theorem]{Corollary}
\newtheorem{lemma}[theorem]{Lemma}
\newtheorem{proposition}[theorem]{Proposition}
\theoremstyle{definition}
\newtheorem{definition}[theorem]{Definition}
\newtheorem{remark}[theorem]{Remark}
\numberwithin{equation}{section}
\newcommand\R{\mathbb{R}}
\newcommand\e{\epsilon}
\newcommand\om{\omega}
\newcommand\g{\gamma}
\begin{document}

\parindent=0pt

\title[Stability of the two-dimensional point vortices in Euler flows]
{Stability of the two-dimensional point vortices in Euler flows}

\author[D. Guo]{Dengjun Guo}
\address{School of Mathematical Sciences,
University of Science and Technology of China, Hefei 230026, Anhui, China}
\email{guodeng@mail.ustc.edu.cn}

\thanks{D.~Guo would like to thank L.~Zhao for his fruitful discussions and kind advices on this work. D.~Guo was partially supported by NSFC grant of China  (NO. 11771415).}
\begin{abstract}
We consider the two-dimensional incompressible Euler equation
\[\begin{cases} \partial_t \om + u\cdot \nabla \om=0 \\
\om(0,x)=\om_0(x).
\end{cases}\]
We are interested in the cases when the initial vorticity has the form $\om_0=\om_{0,\e}+\om_{0p,\e}$, where $\om_{0,\e}$ is concentrated near $M$ disjoint points $p_m^0$ and $\om_{0p,\e}$ is a small perturbation term.

  First, we prove that for such initial vorticities, the solution $\om(x,t)$ admits a decomposition $\om(x,t)=\om_{\e}(x,t)+\om_{p,\e}(x,t)$, where $\om_{\e}(x,t)$ remains concentrated near $M$ points $p_m(t)$ and $\om_{p,\e}(x,t)$ remains small for $t \in [0,T]$. Second, we give a quantitative description when the initial vorticity has the form $\om_0(x)=\sum_{m=1}^M \frac{\gamma_m}{\e^2}\eta(\frac{x-p_m^0}{\e})$, where we do not assume $\eta$ to have compact support. Finally, we prove that if $p_m(t)$ remains separated for all $t\in[0,+\infty)$, then $\om(x,t)$ remains concentrated near $M$ points at least for $t \le c_0 |\log A_{\e}|$, where $A_{\e}$ is small and converges to $0$ as $\e \to 0$.
\end{abstract}

\maketitle

\section{Introduction}
\subsection{Setting of the problem.}
We consider the two-dimensional incompressible Euler equation
\begin{equation}\begin{cases}\begin{aligned}\label{eq euler0}
\partial_t u+u \cdot \nabla u=&\nabla p \quad \text{in $[0,+\infty)\times \R^2$} \\
\nabla \cdot u=&0 \quad \,\,\,\,\,\text{in $[0,+\infty)\times \R^2$} \\
u(\cdot,0)=&u_0 \quad \,\,\text{in $\R^2$}.

\end{aligned}\end{cases}\end{equation}
Here $u=(u^1,u^2):\R^2 \to \R^2$ designates the velocity field and its vorticity $\om$ is defined by
$$
\om:=\partial_1 u^2-\partial_{2}u^1.
$$
We also have the  vorticity-stream formulation of the Euler equation
\begin{equation}\begin{cases}\begin{aligned}\label{eq euler}
\om_t+u \cdot \nabla \om&=0 \,\,\,\,\,\,\,\, \text{in}\,\, \R^2 \times [0,+\infty) \\
\om(\cdot,0)&=\om_0 \,\,\,\,\, \text{in}\,\, \R^2 ,
\end{aligned}\end{cases}\end{equation}
where the velocity $u$ can be recovered by vorticity $\om$ in view of the Biot-Savart law
$$u(x,t)=\frac{1}{2\pi} \int \frac{(x-y)^{\perp}}{|x-y|^2}\om(y,t)\,dy.$$
Yudovich proved in \cite{Yud} that equation \eqref{eq euler} is globally well-posed in some weak sense for initial vorticity  $\om_0\in L^1 \cap L^{\infty}$. That is, for any $\om_0 \in L^1 \cap L^{\infty}$, there exists a unique solution $\om(x,t)$ that solves equation \eqref{eq euler}. The solution $\om(x,t)$ satisfies
\begin{equation}\label{eq transport invariance}
\om(X(\alpha,t),t)=\om_0(\alpha)
\end{equation}
for any $\alpha \in \R^2$ and $t \in [0,+\infty)$. Here $X(\alpha,t)$ is the particle trajectory map which satisfies
\begin{equation*}\begin{cases}\begin{aligned}
\frac{dX(\alpha,t)}{dt}&= u(X(\alpha,t),t)\\
X(\alpha,0)&=\alpha.
\end{aligned}\end{cases}\end{equation*}
Moreover, for any time $t$, the map $X(\cdot,t):\R^2 \to \R^2 $ is a measure preserving diffeomorphism. We will consider the vortex solution of the Euler equation \eqref{eq euler} in this paper. Loosely speaking, we are interested in the initial vorticity of the form
$$\om_0(x)=\om_{0,\e}(x) \to \sum_{m=1}^M \g_m \delta_{p_m^0},$$
where $\g_1, ..., \g_M$ are $M$ nonzero real numbers and $p^0_1, ... , p^0_M \in \R^2$ are $M$ distinct points. A computation in \cite{Hel} suggests that when
\begin{equation*}\om_0(x)=\sum_{m=1}^M \g_m \delta_{p_m^0},\end{equation*}
the solution $\om(x,t)$ of \eqref{eq euler0} should be
$$
\om(x,t)=\sum_{m=1}^M \g_m \delta_{p_m(t)},
$$
where $p(t)=\big(p_1(t), ..., p_M(t)\big):[0,T^*) \to (\R^2)^M$ solves the Helmholtz equation
\begin{equation}\begin{cases}\begin{aligned}\label{eq helmholtz}
\frac{dp_m}{dt}&=\sum_{l \neq m} \frac{\g_l}{2\pi} \frac{(p_m-p_l)^{\perp}}{|p_m-p_l|^2} \\
p_m(0)&=p_m^0.
\end{aligned}\end{cases}\end{equation}
Let us introduce some previous researches on Helmholtz equation first. Unlike the incompressible Euler equation, the solution $p(t)$ to equation \eqref{eq helmholtz} may blow-up in finite time for some initial data,  see e.g. \cite{Are}, \cite{MaP2}. However, it can be shown that the initial data for ODE \eqref{eq helmholtz} which produces a blow-up is exceptional, see \cite{MaP2} and \cite{Dur} for references.
Many mathematicians have investigated the connection between the point vortex dynamic \eqref{eq helmholtz} and the incompressible Euler equation with concentrated initial vorticity; see e.g. \cite{MaP1}, \cite{Wei}, \cite{Mar}, \cite{MaP3}, \cite{MaP11}, \cite{MaP10}, \cite{MaP}, \cite{Ser1}, \cite{Ser2}, \cite{Tuk}. The first rigorous justification is due to Marchioro and Pulvirenti \cite{MaP} and then improved by Marchioro \cite{MaP3}, \cite{MaP11} and Serfati \cite{Ser2}. Let $p(t)$ be the solution of \eqref{eq helmholtz} and $T^*$ be its maximal time of existence, Serfati proved in \cite{Ser2} that when the initial vorticity $\om_{0,\e}(x)$ satisfies
\begin{enumerate}
\item $\om_{0,\e}(x)=\sum_{m=1}^M \om_{0m,\e}(x),$
\item $\int \om_{0m,\e} \,dx=\g_m,$
\item $\g_m \om_{0m,\e}(x) \ge 0$ for all $x \in \R^2$ and $m=1, ..., M$,
\item $\text{supp}(\om_{0m,\e}) \subset B_{\e}(p_m^0),$
\item $\|\om_{0m,\e}\|_{\infty} \le \e^{-k}$ for some $k>0$.
\end{enumerate}
then for any $\alpha<\frac12$ and $T<T^*$, there exists $\e_0=\e_0(k,\alpha,T)$ such that when $\e \le \e_0$, the solution $\om_{\e}(x,t)$ to the Euler equation \eqref{eq euler} admits the following decomposition for all $t \in [0,T]$:
$$\om_{\e}(x,t)=\sum_{m=1}^M \om_{m,\e}(x,t).$$
The $m^{th}$ vorticity $\om_{m,\e}(\cdot,t)$ is supported in $B_{\e^{\alpha}}\big(p_m(t)\big)$ and satisfies $$\int \om_{m,\e}(x,t) \,dx=\g_m.$$
By a similar argument, the $L^{\infty}$ norm in (v) can be replaced by $L^p$ norm for any $p>2$, see \cite{Jer} for more details.
 For the initial vorticity which do not have compact support, Marchioro and Pulvirenti proved in \cite{MaP} that when $\om_{0m,\e}\ge 0$ for all $m$ and $\om_{0,\e} \to \sum \gamma_m \delta_{p_m^0}$ in distribution, then $\om_{\e}(\cdot,t) \to \sum \gamma_m\delta_{p_m(t)}$ in distribution. Moreover, they showed that when $\om_{0m,\e}(x)$ is uniformly bounded outside $B_{\e}(p_m^0)$, $\|\om_{0m,\e}\|_{\infty} \le \e^{-k}$ for some $k< \frac{8}{3}$ and $\om_{0m,\e}\to \gamma_m \delta_{p_m^0}$, then we also have $\om_{\e}(\cdot,t) \to \sum \gamma_m\delta_{p_m(t)}$. In \cite{Wei}, Dávila, del Pino, Musso and Wei considered the initial vorticity of the form
$$\om_{0,\e}(x)\approx \sum_{m=1}^M\frac{\gamma_m}{\e^2}\eta(\frac{x}{\e}),$$
where $\eta(y)=\frac{1}{(1+|y|^2)^2}$. They used a gluing method to give a detailed description of the solution near the point $p_m(t)$. More precisely, they showed that the solution $\om_{\e}(x,t)$ to the Euler equation admits a decomposition: $$\om_{\e}(x,t)=\sum_{m=1}^M \frac{\gamma_m}{\e^2} \eta (\frac{x-p_m(t)}{\e})+\phi_{\e}(x,t),$$ where $\phi_{\e}(x,t)$ is a small perturbation term in some suitable senses.

    \quad Recall that the solution $p(t)$ to the equation \eqref{eq helmholtz} is global for general initial data $p^0$, so one might ask what will happen for the Euler equation when the initial vorticity is concentrated near the point $p^0$. For such initial vorticities, Buttà and Marchioro proved in \cite{MaP1} that the solution $\om_{\e}(x,t)$ remains concentrated near $p(t)$ during a time scale $t \approx |\log \e|$. They also show that when $M=1, p_1^0=0$ (a single vortex concentrated near origin) and if the space domain $\R^2$ is replaced by $B_R(0)$, then the solution remains concentrated near origin during a time scale $t \approx \e^{-a}$ for some $a>0$. In \cite{Mar}, Donati and Iftimie generalized the result to simply connected bounded domains.

      \quad For three-dimensional Euler equation, there is a corresponding phenomenon which called vortex rings. More precisely, if the initial vorticity is supported near a ring in $\R^3$, then the solution remains concentrated near a traveling ring with speed $|\log \e|$, see  \cite{MaP8}, \cite{MaJ}, \cite{MaP6}, \cite{MaP5}, \cite{MaP9}, \cite{Jer} and \cite{MaP7} for references. See also \cite{Jer2}, \cite{Wei1} for a similar phenomenon in Gross-Pitaevskii equations.

\subsection{Main results.}We first introduce some basic notions. For a point $p_m \in \R^2$, we denote by $B_{\e}(p_m)$ the open ball centered at $p_m$ with radius $\e$. And for $f$ a measurable function on $\R^2$, we denote by $\|f\|_p$ the $L^p$ norm of $f$.  Throughout the paper, $C$ denotes a constant which might change from line to line. We denote by $c_0$ a small constant and by $c_0(\gamma)$ a small constant which might depending on $\gamma$.\\
  \\We consider the two-dimensional Euler equation of the form
\begin{equation}\begin{cases}\label{eq perturb}
\partial_t \omega_{\e}+u_{\e}\cdot \nabla \omega_{\e} =0 \\
\omega_{\e}(x,0)=\om_{0\e}(x)=\sum_{m=1}^M \omega_{0m,\e}(x)+\omega_{0p ,\e}(x),
\end{cases}\end{equation}
where
\[
u_{\e}(x,t)=\frac{1}{2\pi}\int \frac{(x-y)^{\perp}}{|x-y|^2} \om_{\e}(y,t)\,dy.
\]
Fix $q>2$ and set
$$A_{\e}:= \max \left\{   \|\om_{0p,\e}\|_q^{\frac{q}{2q-2}}\| \om_{0p,\e}\|_1^{\frac{q-2}{2q-2}}, \e \right\}. $$

Let $p(t)=\left(p_1(t), ..., p_M(t)\right)$ be the solution of equation \eqref{eq helmholtz}. Assume $p_m(t) \neq p_l(t)$ for all $t\in [0,T^*)$ when $m\neq l$, then we have
\begin{theorem}\label{thm main}
Assume $\om_{0,\e} \in L^{1} \cap L^{\infty}$ and $\om_{0m,\e}$ does not change sign with
\[\text{supp}(\om_{0m,\e}) \subset B_{\e}(p_m(0)).\]
 Let the $m^{th}$ total vorticity $\gamma_{m,\e}$ be $$\gamma_{m,\e}= \int \om_{0m, \e} \,dx,$$
which satisfies
 \[
 |\gamma_m-\gamma_{m,\e}| \le CA_{\e}^{\frac12}.
 \]
We assume also that $A_{\e} \to 0$ as $\e \to 0$ and there exist some $\gamma >0$ , $p_1>2$ such that $$\| \om_{0m,\e}\|_{p_1} \le A_{\e}^{-\gamma}$$
for $m=1, ... , M$.
Then for any $T < T^*$ and any $a < \frac12$, there exists $\e_0 >0 $ such that for all $0 < \e \le \e_0$, the solution $\om_{\e}$ of \eqref{eq perturb} admits a decomposition $\om_{\e}=\sum_{m=1}^M \om_{m,\e}+\om_{p,\e}$, where
$$\text{supp}(\om_{m,\e}(\cdot,t))\subset B_{(A_{\e})^{a}}(p_m(t)),$$ $$ \int \om_{m,\e}(x,t) \,dx=\gamma_{m,\e}$$
and
$$\|\om_{p,\e}\|_r=\|\om_{0p,\e}\|_r$$ for any $r \ge 1$.
\end{theorem}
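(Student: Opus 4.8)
The plan is to build the decomposition directly from the transport structure of the flow and then control each piece by a continuity (bootstrap) argument. Let $X(\cdot,t)$ be the particle-trajectory map of the full velocity $u_{\e}$; since the Euler evolution transports vorticity along $X$, I would simply set $\om_{m,\e}(X(\alpha,t),t)=\om_{0m,\e}(\alpha)$ and $\om_{p,\e}(X(\alpha,t),t)=\om_{0p,\e}(\alpha)$. Because $X(\cdot,t)$ is a measure-preserving diffeomorphism, this immediately yields $\int \om_{m,\e}(\cdot,t)=\gamma_{m,\e}$, $\|\om_{p,\e}(\cdot,t)\|_r=\|\om_{0p,\e}\|_r$ for every $r\ge1$, and likewise $\|\om_{m,\e}(\cdot,t)\|_{p_1}=\|\om_{0m,\e}\|_{p_1}\le A_{\e}^{-\gamma}$ for all $t$. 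A first key observation, obtained by splitting the Biot--Savart kernel into a near part estimated in $L^{q'}$ and a far part estimated by $\|\cdot\|_1$ and optimizing the cut-off radius, is that $A_{\e}$ is precisely (a bound on) the induced velocity of the perturbation: $\|u_{p,\e}(\cdot,t)\|_{\infty}\le C\|\om_{p,\e}\|_q^{q/(2q-2)}\|\om_{p,\e}\|_1^{(q-2)/(2q-2)}\le CA_{\e}$ uniformly in $t$. Thus the perturbation contributes a velocity of size at most $A_{\e}$ everywhere and for all time, which is the sense in which it is negligible.

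I would then set up the continuity argument: after a sign change we may assume $\om_{m,\e}\ge0$; let $B_m(t)=\gamma_{m,\e}^{-1}\int x\,\om_{m,\e}(x,t)\,dx$ be the center of vorticity, and let $T_{\e}$ be the largest time up to which the supports of the $\om_{m,\e}(\cdot,t)$ remain inside $B_{A_{\e}^{a}}(p_m(t))$ and pairwise well-separated. The goal is to show $T_{\e}\ge T$ for $\e$ small by strictly improving the defining bounds on $[0,T_{\e}]$. On this interval the blobs are separated by a distance bounded below independently of $\e$, so the mutually induced fields $u_{l,\e}$ ($l\neq m$) are smooth near $B_m$. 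For the motion of the centers, the self-induced velocity integrates to zero against its own vorticity by the antisymmetry of the Biot--Savart kernel, so $\dot B_m=\gamma_{m,\e}^{-1}\int\big(\sum_{l\neq m}u_{l,\e}+u_{p,\e}\big)\om_{m,\e}$; expanding $u_{l,\e}$ about $B_l$, using $\int(x-B_m)\om_{m,\e}=0$, the bound on $u_{p,\e}$, and $|\gamma_l-\gamma_{l,\e}|\le CA_{\e}^{1/2}$, a Gronwall comparison with \eqref{eq helmholtz} gives $|B_m(t)-p_m(t)|\le CA_{\e}^{1/2}$ on $[0,T_{\e}]$.

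The same antisymmetrization shows that the self-interaction contributes exactly zero to $\frac{d}{dt}\int|x-B_m|^2\om_{m,\e}$, while the external fields contribute $O(I)$ (again because $\int(x-B_m)\om_{m,\e}=0$) and the perturbation contributes $O(A_{\e}I^{1/2})$; since $I_m(0)\le C\e^2\gamma_{m,\e}\le CA_{\e}^2$, a Bernoulli-type integration yields the moment-of-inertia bound $I_m(t):=\int|x-B_m(t)|^2\om_{m,\e}(x,t)\,dx\le CA_{\e}^2$, and hence by Chebyshev that the mass lying outside $B_{\rho}(B_m)$ is at most $CA_{\e}^2\rho^{-2}$. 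This controls the bulk of each blob at the concentration scale $\approx A_{\e}^{1/2}$, but not yet its full support.

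The crux, and the step I expect to be the main obstacle, is upgrading this control of the bulk to control of the full support, i.e. showing that no particle of $\om_{m,\e}$ escapes $B_{A_{\e}^{a}}(B_m)$. For this I would run Marchioro's iterative scheme over a geometric sequence of radii starting from $\approx A_{\e}^{1/2}$: bounding the radial component of the self-induced velocity at radius $\rho$ by isolating the bulk inside (whose monopole is purely tangential and whose dipole about $B_m$ is small because the total first moment vanishes) from the small annular mass outside (estimated by the Biot--Savart interpolation through $\|\om_{m,\e}\|_1$ and the preserved bound $\|\om_{m,\e}\|_{p_1}\le A_{\e}^{-\gamma}$), one shows the outgoing flux across each radius is so small that the mass outside decreases faster than geometrically from scale to scale. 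The single-sign hypothesis is essential here, since it prevents cancellations that would destroy the monotone decay of the mass-outside functional, and the condition $a<\tfrac12$ leaves exactly the room needed to absorb the losses coming from $|\gamma_m-\gamma_{m,\e}|$, from $|B_m-p_m|$, and from the factor $A_{\e}^{-\gamma}$. Iterating down to radius $A_{\e}^{a}$ forces the support to remain inside $B_{A_{\e}^{a}}(B_m)$, which together with $|B_m-p_m|\le CA_{\e}^{1/2}\ll A_{\e}^{a}$ strictly improves the bootstrap bound and closes the argument, giving $T_{\e}\ge T$ and the stated decomposition.
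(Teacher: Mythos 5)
Your proposal is correct and follows essentially the same route as the paper: the transport decomposition along the flow map, the Biot--Savart interpolation bound identifying $A_{\e}$ as the size of the perturbation velocity, Gr\"onwall control of the centroid and moment of inertia, Marchioro's dyadic iteration for the mass outside combined with the radial-velocity/trajectory estimate (using the single-sign and $L^{p_1}$ hypotheses) to confine the support, and a bootstrap on $[0,T_{\e}]$ closed by comparison with the Helmholtz dynamics. The only differences are cosmetic: the paper routes the centroid comparison through the auxiliary point $P_{m,\e}(t)$ flowing along the external field rather than comparing $B_m$ to $p_m$ directly, and it runs the support confinement to an intermediate exponent $b\in(a,\tfrac12)$ around the centroid --- the standard fix for the slight imprecision in your last step, since supp$\,\subset B_{A_{\e}^{a}}(B_m)$ together with $|B_m-p_m|\le CA_{\e}^{1/2}$ gives only $B_{A_{\e}^{a}+CA_{\e}^{1/2}}(p_m)$, whereas $B_{A_{\e}^{b}+CA_{\e}^{1/2}}(p_m)\subset B_{A_{\e}^{a}}(p_m)$ does strictly improve the bootstrap.
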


\begin{remark}
For the initial vorticity $\om_{0\e}$ as in \eqref{eq perturb}, we see directly that $\om_{0\e,m}$ corresponds to an $O(\e)$ regularization of the Dirac. We also note that $\om_{0\e,p}$ generates a velocity field $u_{0\e,p}$ which corresponding to an $O(\|u_{0\e,p}\|_{\infty})$ perturbation of the Euler flow.  So $A_{\e}$ is a natural quantity that is used to measure the competition between the regularization and the perturbation.
\end{remark}
Next we consider the initial vorticity $\om_{0,\e}$ of the form
$$\om_{0,\e}(x)=\sum_{m=1}^M \frac{\gamma_m}{\e^2} \eta(\frac{x-p_m^0}{\e}).$$
To ensure that $\om_{0,\e} \in L^1 \bigcap L^{\infty}$, we make an assumption on $\eta$ that
$
\eta(x) \lesssim \frac{1}{1+|x|^{2+\sigma}}
$ for some $\sigma>0$. We prove that under such assumptions, the solution $\om_{\e}(x,t)$ of the Euler equation remains concentrated near $M$ points $p_m(t)$ for any $0 \le t \le T$. Here we do not require $\eta$ to have compact support.
\begin{theorem}\label{thm concentration of mass}(Concentration of the vorticity)
Let $\eta \in L^1 \bigcap L^{\infty}$ be a nonnegative function with
\[
\eta(x) \le \frac{C}{1+|x|^{2+\sigma}}
\]
for some $\sigma>0$ and
\[
\int \eta(x) \,dx=1.
\]
Suppose the initial vorticity $\om_{0,\e}$ has the form
$$\om_{0,\e}(x)=\sum_{m=1}^M \frac{\gamma_m}{\e^2} \eta(\frac{x-p_m^0}{\e}).$$
Let $\om_{\e}(x,t)$ be the solution of the Euler equation with initial vorticity $\om_{0,\e}$, then we obtain
\begin{enumerate}
\item \emph{Concentration properties.} For $\sigma>0$ and any $a <\frac{\sigma}{2(\sigma+2)}$. Let
\[
B(t):= \bigcup_{m=1}^M B_{\e^a}(p_m(t)).
\]
Then we have
\[
\int_{B(t)^c} |\om_{\e}(x,t)|\,dx\to 0 \quad \text{as} \quad \e \to 0.
\]
More precisely, for $\sigma>0$ and $a <\frac{\sigma}{2(\sigma+2)}$, there exists $\e_0=\e_0(a,\sigma,T)$ such that
\begin{equation}\label{eq concentration of mass1}
\int_{B(t)^c} |\om_{\e}(x,t)|\,dx \le C \e^{\frac{\sigma^2}{\sigma+2}}
\end{equation}
for all $0<\e \le \e_0$ and $0 \le t \le T$.

\item \emph{Smallness of the vorticity outside the core.} For any $\gamma_1<\sigma$ there exist constants $a,\e_0>0$ such that
\begin{equation*}
\int_{B(t)^c} |\om_{\e}(x,t)|\,dx \le \e^{\gamma_1}
\end{equation*}
for all $0<\e \le \e_0$ and $0 \le t \le T$.
More precisely, for $\sigma>0$ and any $\frac{\sigma}{\sigma+1}<\gamma_1<\sigma$, define
\[
a_0(\gamma_1,\sigma):= \frac{1}{2}\min \left\{1-\frac{\gamma_1}{\sigma},\gamma_1+\frac{\gamma_1}{\sigma}-1 \right\}.
\]
Then  for any $a<a_0$, there exists $\e_0=\e_0(a,\sigma,\gamma_1,T)$ such that
\begin{equation}\label{eq concentration of mass2}
\int_{B(t)^c} |\om_{\e}(x,t)|\,dx \le C\e^{\gamma_1}
\end{equation}
for all $0<\e \le \e_0$ and $0 \le t \le T$.
\end{enumerate}

\end{theorem}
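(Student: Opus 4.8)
The plan is to reduce to Theorem \ref{thm main} by splitting each non-compactly supported bump into a compactly supported core and a small tail. Fix a truncation exponent $b\in(0,1)$ and set
\[
\om_{0m,\e}(x):=\frac{\gamma_m}{\e^2}\eta\Big(\frac{x-p_m^0}{\e}\Big)\,\mathbf{1}_{B_{\e^b}(p_m^0)}(x),\qquad \om_{0p,\e}:=\om_{0,\e}-\sum_{m=1}^M\om_{0m,\e}.
\]
Then each $\om_{0m,\e}$ is nonnegative with $\mathrm{supp}\,\om_{0m,\e}\subset B_{\e^b}(p_m^0)$, and $\om_{0p,\e}$ plays the role of the perturbation. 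Using $\eta(y)\le C|y|^{-(2+\sigma)}$ and the substitution $y=(x-p_m^0)/\e$, the tail obeys
\[
\|\om_{0p,\e}\|_1\le\sum_m\gamma_m\!\!\int_{|y|>\e^{b-1}}\!\!\eta\,dy\lesssim\e^{\sigma(1-b)},\qquad \|\om_{0p,\e}\|_q^{\frac{q}{2q-2}}\|\om_{0p,\e}\|_1^{\frac{q-2}{2q-2}}\lesssim\e^{\,\sigma-b(1+\sigma)},
\]
the second bound following from a direct computation of $\|\om_{0p,\e}\|_q$. Hence, applying Theorem \ref{thm main} with $\e$ replaced by the truncation radius $\e^b$, the governing quantity is $A_{\e^b}\lesssim\e^{\min\{b,\,\sigma-b(1+\sigma)\}}$; moreover $|\gamma_m-\gamma_{m,\e}|\lesssim\e^{\sigma(1-b)}$, $\|\om_{0m,\e}\|_{p_1}$ grows only polynomially in $\e^{-1}$, and one checks $\e^{\sigma(1-b)}\lesssim A_{\e^b}^{1/2}$, so all hypotheses of Theorem \ref{thm main} hold in the admissible range of $b$.

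Theorem \ref{thm main} then yields $\om_\e=\sum_m\om_{m,\e}+\om_{p,\e}$ with $\mathrm{supp}\,\om_{m,\e}(\cdot,t)\subset B_{A_{\e^b}^{a'}}(p_m(t))$ for any $a'<\tfrac12$ and $\|\om_{p,\e}\|_1=\|\om_{0p,\e}\|_1$. Choosing $a'$ near $\tfrac12$ so that $A_{\e^b}^{a'}\le\e^a$ places the cores inside $B(t)$, whence
\[
\int_{B(t)^c}|\om_\e(x,t)|\,dx=\int_{B(t)^c}|\om_{p,\e}(x,t)|\,dx\le\|\om_{0p,\e}\|_1\lesssim\e^{\sigma(1-b)}.
\]
This already proves part (ii): taking $b=1-\gamma_1/\sigma$ makes the right-hand side $\e^{\gamma_1}$, while the containment condition $A_{\e^b}^{a'}\le\e^a$ reads $a<\tfrac12\min\{b,\sigma-b(1+\sigma)\}$. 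Writing $T_1=1-\gamma_1/\sigma$ and $T_2=\gamma_1+\gamma_1/\sigma-1$, at this $b$ one has exactly $b=T_1$ and $\sigma-b(1+\sigma)=T_2$, so the condition becomes $a<a_0(\gamma_1,\sigma)$. The lower bound $\gamma_1>\tfrac{\sigma}{\sigma+1}$ is precisely what forces $T_2>0$ (equivalently $A_{\e^b}\to0$ and $\e^{\gamma_1}\lesssim A_{\e^b}^{1/2}$), and $\gamma_1<\sigma$ guarantees $b\in(0,1)$.

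Part (i) is the delicate one. Optimizing the same argument over $b$ (balancing $b=\sigma-b(1+\sigma)$ at $b=\tfrac{\sigma}{\sigma+2}$) gives the full core range $a<\tfrac{\sigma}{2(\sigma+2)}$, but only the exterior bound $\e^{2\sigma/(\sigma+2)}$; this equals $\e^{\sigma^2/(\sigma+2)}$ at $\sigma=2$ and is stronger for $\sigma<2$, so the crude step suffices there. For $\sigma>2$ the estimate $\int_{B(t)^c}|\om_{p,\e}|\le\|\om_{0p,\e}\|_1$ is too lossy, since it charges every bit of tail mass to the exterior. To recover the sharp exponent I would instead use the transport identity \eqref{eq transport invariance}: by measure preservation,
\[
\int_{B(t)^c}|\om_\e(x,t)|\,dx=\int_{\{\alpha:\,X(\alpha,t)\notin B(t)\}}|\om_{0,\e}(\alpha)|\,d\alpha,
\]
so it is enough to show that a particle starting within $c\e^a$ of some $p_m^0$ remains within $\e^a$ of $p_m(t)$; the exterior mass is then controlled by the initial tail beyond radius $c\e^a$, i.e. $\lesssim\e^{\sigma(1-a)}\le C\e^{\sigma^2/(\sigma+2)}$ in the admissible range.

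The main obstacle is exactly this confinement of trajectories, which I expect to treat by a Marchioro--Pulvirenti-type flux estimate for $\tfrac{d}{dt}\int_{|x-p_m(t)|>\rho}|\om_\e|\,dx$. The point is that the singular part of $u_\e$ near $p_m(t)$ is tangential, so the radial drift across $\partial B_\rho(p_m(t))$ is produced only by the $O(\rho)$ Lipschitz error of the smooth inter-vortex field — finite because $p_m(t)\neq p_l(t)$ on $[0,T]$, which also keeps the balls disjoint — together with the tail-induced velocity, whose magnitude is again governed by $A_{\e^b}$. Integrating the resulting differential inequality on $[0,T]$ and summing over $m$ is the quantitative heart of the proof, and turning the flux bound into the precise exponent $\tfrac{\sigma^2}{\sigma+2}$ uniformly in $t$ is where the real work lies.
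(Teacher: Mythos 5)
Your construction is exactly the paper's. Proposition \ref{prop main1} performs the same truncation of each bump at radius $\e^{1-\beta}$ (your $b$ is the paper's $1-\beta$), derives the same two tail bounds ($\|\om_{0p,\e}\|_1\lesssim\e^{\beta\sigma}$ and $\|\om_{0p,\e}\|_q^{\frac{q}{2q-2}}\|\om_{0p,\e}\|_1^{\frac{q-2}{2q-2}}\lesssim\e^{\beta\sigma+\beta-1}$), and the theorem is then deduced by applying Theorem \ref{thm main} at the truncation scale, exactly as you do. In particular your part (ii), with $b=1-\gamma_1/\sigma$ (the paper's $\beta=\gamma_1/\sigma$), reproduces the paper's proof of (ii) verbatim, including the identification of $a_0(\gamma_1,\sigma)=\frac12\min\{b,\,\sigma-b(1+\sigma)\}$; that part of your proposal is complete and correct.

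On part (i), your bookkeeping is the honest one, and the discrepancy you flag is a defect of the paper rather than a gap you failed to close. The paper, too, bounds the exterior mass only by the total tail mass $\|\om_{0p,\e}\|_1$; its own computation gives $\|\om_{0p,\e}\|_1\lesssim\e^{\beta\sigma}=\e^{\frac{2\sigma}{\sigma+2}}$ at $\beta=\frac{2}{\sigma+2}$, yet \eqref{eq section3 2}, \eqref{eq section3 3} and the final step of the proof of (i) record this as $C\e^{\frac{\sigma^2}{\sigma+2}}$, i.e.\ as $\e^{(1-\beta)\sigma}$ (the same slip makes the proof assert $A_{\e}=\e^{\frac{\sigma^2}{\sigma+2}}$ where the balancing actually gives $A_{\e}=\e^{\frac{\sigma}{\sigma+2}}$). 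So the paper's argument establishes \eqref{eq concentration of mass1} only with the exponent $\frac{2\sigma}{\sigma+2}$, which implies the stated exponent precisely when $\sigma\le2$ --- exactly your observation; for $\sigma>2$ there is no additional mechanism in the paper that you could have missed. Two caveats about your sketched repair. First, it cannot be completed by citing the existing machinery: Theorem \ref{thm section main main} confines a blob of initial radius $\rho$ only into a ball of radius at least $\rho^{1/2}$ (since $A_{\e}\ge\e$ there), so ``mass starting within $c\e^{a}$ of $p_m^0$ stays within $\e^{a}$ of $p_m(t)$'' would require a genuinely new (say, dyadic-shell) argument for the annular tail between the scales $\e^{\sigma/(\sigma+2)}$ and $\e^{a}$. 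Second, no repair can cover the full stated range when $\sigma>4$: taking $M=1$, $\gamma_1=1$ and $\eta$ comparable to $(1+|y|)^{-2-\sigma}$, already at $t=0$ the exterior mass is of order $\e^{\sigma(1-a)}$, and $\sigma(1-a)<\frac{\sigma^2}{\sigma+2}$ as soon as $a>\frac{2}{\sigma+2}$, which the range $a<\frac{\sigma}{2(\sigma+2)}$ permits once $\sigma>4$. Hence your closing inequality $\e^{\sigma(1-a)}\le C\e^{\sigma^2/(\sigma+2)}$ holds on the whole admissible range only for $\sigma\le4$: in the band $2<\sigma\le 4$ a completed version of your sketch would prove more than the paper's proof actually does, while for $\sigma>4$ the fault lies in the statement of part (i), not in your proposal.
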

\begin{remark}\label{rmk concentration of mass}
Indeed, we can show that $\om_{\e}(\cdot,t)$ admits a decomposition
\[
\om_{\e}(\cdot,t)=\sum_{m=1}^M \om_{m,\e}(\cdot,t)+\om_{p,\e}(\cdot,t)
\]
where $\om_{m,\e}(\cdot,t)$ is supported in $B_{\e^a}(p_m(t))$ and $\om_{p,\e}$ satisfies \eqref{eq concentration of mass1} or \eqref{eq concentration of mass2} respectively.
\end{remark}
\begin{remark}
We can also consider the initial vorticity of the form
$$\om_{0,\e}(x)=\sum_{m=1}^M \frac{\gamma_m}{\e^2} \eta_{m,\e}(\frac{x-p_m^0}{\e}),$$
where $\eta_{m,\e}$ is a nonnegative function satisfies $\int \eta_{m,\e}(x)\,dx=1$ and $\eta_{m,\e}(x) \lesssim \frac{1}{1+|x|^{2+\sigma}}$. The same result as Theorem \ref{thm concentration of mass} holds and the proof is identically the same.
\end{remark}
\begin{remark}{(Stability of the solution.)}
With the same method, we can treat the initial vorticity of the form
\[
\om_{0,\e}(x)=\sum_{m=1}^M \frac{\gamma_m}{\e^2} \eta_{m,\e}(\frac{x-p_m^0}{\e})+\om_{0p,\e}(x),
\]
where $\om_{p,\e}$ satisfies
\[
\|\om_{0p,\e}\|_q^{\frac{q}{2q-2}}\| \om_{0p,\e}\|_1^{\frac{q-2}{2q-2}} \le \e^{a}
\]
for some $q>2$ and $a>0$. For such initial vorticities, a similar result holds as in Theorem \ref{thm concentration of mass}.

\end{remark}

As a corollary, we get
\begin{corollary}\label{cor concentration of mass}(Concentration of the vorticity for Schwartz functions) Let $\eta$ be a nonnegative Schwartz function with
$$
\int \eta(x) \,dx=1.
$$
Assume the initial vorticity $\om_{0,\e}$ has the form
$$\om_{0,\e}(x)=\sum_{m=1}^M \frac{\gamma_m}{\e^2} \eta(\frac{x-p_m^0}{\e}).$$
Then for any $a < \frac12$ and any $\gamma>0$, there exists $\e_0=\e_0(a,\gamma,T)$ such that when $\e \le \e_0$, we have
\[
\int_{B(t)^c} |\om_{\e}(x,t)|\,dx \le \e^{\gamma}
\]
for all $t \in [0,T]$.
\end{corollary}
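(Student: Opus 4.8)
The plan is to obtain the corollary as a direct consequence of part (ii) of Theorem \ref{thm concentration of mass}, using the fact that a Schwartz function decays faster than any polynomial. The entire content is a choice of the two free parameters $\sigma$ and $\gamma_1$: I will show that by taking the decay exponent $\sigma$ large one can push the admissible radius exponent $a_0(\gamma_1,\sigma)$ arbitrarily close to $\frac12$ while keeping the mass exponent $\gamma_1$ above any prescribed value.

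Since $\eta$ is Schwartz, for every $\sigma>0$ there is a constant $C_\sigma$ with $\eta(x)\le C_\sigma(1+|x|^{2+\sigma})^{-1}$, so the hypotheses of Theorem \ref{thm concentration of mass} hold for arbitrarily large $\sigma$ and part (ii) is available for each such $\sigma$. Now fix the target $a<\frac12$ and $\gamma>0$, and set $\gamma_1:=\gamma+2$. In particular $\gamma_1\ge 2$, so that $2-\gamma_1\le 0<2\gamma_1/\sigma$ for every $\sigma>0$; rearranging, this says $1-\frac{\gamma_1}{\sigma}<\gamma_1+\frac{\gamma_1}{\sigma}-1$, i.e. the first entry always governs the minimum defining $a_0$. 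Hence
\[
a_0(\gamma_1,\sigma)=\frac12\Big(1-\frac{\gamma_1}{\sigma}\Big)\longrightarrow \frac12\qquad\text{as }\sigma\to\infty.
\]
I would therefore fix $\sigma$ large enough that $a_0(\gamma_1,\sigma)>a$; note that for such $\sigma$ (in particular $\sigma>\gamma_1$) the constraint $\frac{\sigma}{\sigma+1}<\gamma_1<\sigma$ required in Theorem \ref{thm concentration of mass}(ii) is automatic, since $\gamma_1\ge 2>1>\frac{\sigma}{\sigma+1}$.

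With these values of $\sigma$ and $\gamma_1$, Theorem \ref{thm concentration of mass}(ii) supplies an $\e_0$ such that $\int_{B(t)^c}|\om_{\e}(x,t)|\,dx\le C\e^{\gamma_1}$ for all $0<\e\le\e_0$ and $t\in[0,T]$. Since $\gamma_1=\gamma+2>\gamma$, we have $C\e^{\gamma_1}=C\e^{2}\,\e^{\gamma}\le\e^{\gamma}$ after possibly shrinking $\e_0$, which is the assertion. The only step needing care is the observation that once $\gamma_1\ge 2$ the minimum in $a_0$ is controlled by its first entry; this is what allows the radius exponent to approach $\frac12$ without forcing $\gamma_1$ to stay small, and it is the reason the corollary can simultaneously demand $a$ near $\frac12$ and $\gamma$ arbitrarily large.
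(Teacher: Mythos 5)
Your proof is correct and is essentially the paper's intended argument: the corollary is stated there as an immediate consequence of Theorem \ref{thm concentration of mass}, exploiting that a Schwartz function satisfies the decay hypothesis $\eta(x)\le C_\sigma(1+|x|^{2+\sigma})^{-1}$ for every $\sigma>0$. Your parameter bookkeeping via part (ii) — taking $\gamma_1=\gamma+2\ge 2$ so that the minimum defining $a_0$ is its first entry, then $\sigma>\gamma_1/(1-2a)$ so that $a_0(\gamma_1,\sigma)=\frac12\bigl(1-\frac{\gamma_1}{\sigma}\bigr)>a$, and finally absorbing the constant via $C\e^{\gamma_1}\le\e^{\gamma}$ — is valid; the only remark is that part (i) yields the same conclusion slightly more directly, since $\frac{\sigma}{2(\sigma+2)}\to\frac12$ and $\frac{\sigma^2}{\sigma+2}\to\infty$ as $\sigma\to\infty$.
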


  If we assume also that $|p_m(t)-p_l(t)|\ge \delta>0$ for all $t \in [0,\infty)$ when $m\neq l$. Then we obtain the following Theorem.
\begin{theorem}\label{thm long time}(Long time dynamic for Euler flows.) Assume $\om_{0,\e}$ satisfies the same assumptions as in Theorem \ref{thm main}. For $a<\frac12$, let
$$\tau_{\e} := \sup \Big\{ t\in [0,\infty) : \text{supp}\big(\om_{m,\e}(\cdot,t)\big)\subset B_{A_{\e}^a}\big(p_m(t)\big) \Big\}.$$

Then there exist $c_0=c_0(a,\delta)>0$ and $\e_0=\e_0(c_0)$ such that
$$
\tau_{\e} \ge c_0 |\log A_{\e}|
$$
for all $0 < \e \le \e_0$. Similar results of Theorem \ref{thm concentration of mass} and Corollary \ref{cor concentration of mass} hold as well.
\end{theorem}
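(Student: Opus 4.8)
The plan is to run a continuation (bootstrap) argument that makes the time dependence in Theorem~\ref{thm main} quantitative, exploiting the uniform separation $|p_m(t)-p_l(t)|\ge\delta$ to produce growth rates that are \emph{independent of time}. Suppose that on an interval $[0,\tau)$ each support $\mathrm{supp}(\om_{m,\e}(\cdot,t))$ lies in $B_{A_\e^a}(p_m(t))$. Since $A_\e^a\to0$, for $\e$ small the $M$ blobs are then mutually separated by at least $\delta/2$, so the Biot--Savart kernel $K(z)=\tfrac{1}{2\pi}z^\perp/|z|^2$ and its gradient stay bounded away from their singularities on the relevant region. I would split the velocity acting on the $m$-th blob as $u_\e=u_m^{\mathrm{self}}+\sum_{l\ne m}u_l+u_p$, with $u_l=K*\om_{l,\e}$ and $u_p=K*\om_{p,\e}$, and record two time-uniform bounds: first, $\|\nabla u_l\|_{L^\infty(B_{2A_\e^a}(p_m))}\le C\g/\delta^2$ for all $t\in[0,\tau)$; second, the potential estimate $\|u_p\|_\infty\le C\|\om_{p,\e}\|_1^{(q-2)/(2q-2)}\|\om_{p,\e}\|_q^{q/(2q-2)}$, which together with the conservation $\|\om_{p,\e}\|_r=\|\om_{0p,\e}\|_r$ from Theorem~\ref{thm main} gives exactly $\|u_p\|_\infty\le CA_\e$ for all $t$. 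The hypothesis $\|\om_{0m,\e}\|_{p_1}\le A_\e^{-\g}$, also conserved along the flow, keeps $\|u_m^{\mathrm{self}}\|_\infty$ polynomially bounded in $A_\e^{-1}$, which is what enters the flux estimate below.

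The reason the time scale stays long is that the self-interaction neither drifts the vortex center nor spreads the blob: by the antisymmetry of $K$ one has $\int (K*\om_{m,\e})\,\om_{m,\e}\,dx=0$ and $\int x\cdot(K*\om_{m,\e})\,\om_{m,\e}\,dx=0$. Writing $B_m(t)$ for the center of vorticity of $\om_{m,\e}(\cdot,t)$ and $I_m(t)=\int|x-B_m(t)|^2|\om_{m,\e}|\,dx$, differentiation and these identities annihilate the self-term, leaving only the Lipschitz field $\sum_{l\ne m}u_l$ (contributing $\tfrac{C\g}{\delta^2}I_m$) and the perturbation (contributing $\le CA_\e\,\g^{1/2}I_m^{1/2}$ by Cauchy--Schwarz). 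This yields $\frac{d}{dt}I_m^{1/2}\le L\,I_m^{1/2}+CA_\e$ with $L=L(\g,\delta)$ time-independent, and since $I_m(0)^{1/2}\le C\e\le CA_\e$, Gronwall gives $I_m(t)^{1/2}\le Ce^{Lt}A_\e$. In parallel the same identities give $\dot B_m=\sum_{l\ne m}\g_{l,\e}K(B_m-B_l)+O(A_\e)$, so that $E(t):=\sum_m|B_m(t)-p_m(t)|$ obeys $\dot E\le L'E+CA_\e^{1/2}$, the source $A_\e^{1/2}$ coming from $|\g_m-\g_{m,\e}|\le CA_\e^{1/2}$, whence $E(t)\le Ce^{L't}A_\e^{1/2}$. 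Because $A_\e^{1/2}\gg A_\e$, the center-tracking error dominates the bulk spread, and since $a<\tfrac12$ the binding constraint for keeping the blob inside $B_{A_\e^a}(p_m(t))$ is $e^{L't}A_\e^{1/2}\le\tfrac12 A_\e^a$, which holds for $t\le \frac{1/2-a}{L'}|\log A_\e|$; at such times the support is strictly interior, so $\tau$ can be continued and the bootstrap closes.

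The genuine difficulty is upgrading these \emph{bulk} estimates to control of the actual support, i.e.\ showing that \emph{no} mass leaves $B_{A_\e^a}(p_m(t))$ rather than merely that the second moment is small. Here I would follow the Marchioro-type analysis underlying Theorem~\ref{thm main}: track the escaping mass $\mu(R,t)=\int_{|x-B_m|>R}|\om_{m,\e}|\,dx$ through the radial flux across $\partial B_R(B_m(t))$, where the external field again contributes at most $L'R+CA_\e$ to the radial relative velocity, while the delicate singular term $\oint|\om_{m,\e}|\,\hat r\cdot u_m^{\mathrm{self}}$ is estimated, using the antisymmetry of $K$ under $x\leftrightarrow y$, by the mass already present near and beyond radius $R$. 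This produces a closed differential inequality for $\mu$ that forbids mass creation at radii up to $A_\e^a$. The main technical point, and the place where the proof differs from the fixed-time Theorem~\ref{thm main}, is to carry the separation-uniform constants $L'$ and $CA_\e$ through the dyadic iteration over nested radii so that the resulting confinement time stays proportional to $|\log A_\e|$ instead of degenerating.

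Taking $c_0$ to be the minimum of the thresholds produced by the moment estimate, the center-tracking estimate, and this mass-flux iteration yields $c_0=c_0(a,\delta)>0$ with $\tau_\e\ge c_0|\log A_\e|$ for all $\e\le\e_0(c_0)$. The analogues for Theorem~\ref{thm concentration of mass} and Corollary~\ref{cor concentration of mass} follow by applying the same uniform-in-time estimates to the corresponding escaping-mass quantities, the only change being that the non-compactly-supported tail of $\eta$ is absorbed into the $u_p$-type contribution.
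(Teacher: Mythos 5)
Your proposal is correct and follows essentially the same route as the paper: a continuation argument on the separation of the blobs, time-uniform Lipschitz and $\|u_p\|_\infty\le CA_\e$ bounds, antisymmetry-based second-moment and center-of-vorticity estimates closed by Gr\"onwall (with the $e^{Lt}$ losses absorbed over times $t\le c_0|\log A_\e|$), the Marchioro-type escaping-mass iteration over dyadic radii to upgrade bulk control to support control, and finally $c_0$ taken as the minimum of the resulting thresholds. The only differences are bookkeeping (you track the centers $B_m$ against $p_m$ directly with source $A_\e^{1/2}$, whereas the paper inserts the auxiliary trajectories $P_{m,\e}$ of the external field and bounds $G(t)=\sum_m|P_{m,\e}-p_m|$ with source $A_\e^{b}$, and you bootstrap on the $A_\e^a$-balls rather than the $\delta/16$-balls), which do not change the substance of the argument.
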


\subsection{Strategy of the proof.}
We will follow the strategy in \cite{MaP}, in which Marchioro and Pulvirenti considered the Euler equation with a Lipschitz continuous background flow
\begin{equation*}
\begin{cases}\begin{aligned}
\partial_t \om_{\e}+(u_{\e}+F_{\e}) \cdot \nabla \om_{\e}=0 \\
\om_{\e}(\cdot,0)=\om_{0,\e}.
\end{aligned}\end{cases}
\end{equation*}
Here $\nabla \cdot F_{\e}=0,$
 $F_{\e}(\cdot,t)$ is Lipschitz continuous and the Lipschitz norm is independent of time $t$. In Section 2, we will show that the presence of the perturbation term $\om_{0p,\e} $ may lead to an equation of the form
\begin{equation}\label{eq section1 single vortex MaP1}
\begin{cases}\begin{aligned}
\partial_t \om_{\e}+(u_{\e}+F_{1,\e}+F_{2,\e}) \cdot \nabla \om_{\e}=0 \\
\om_{\e}(\cdot,0)=\om_{0,\e},
\end{aligned}\end{cases}
\end{equation}
where $F_{1,\e}$ satisfies the same property as $F_{\e}$ and $F_{2,\e}$ is a divergence free perturbation term related to $\om_{0p,\e}$. Next we show that when the initial vorticity $\om_{0,\e}(x)$ is supported in a ball of radius $\e$, then the solution $\om_{\e}(x,t)$ of \eqref{eq section1 single vortex MaP1} remains supported in a small ball near its center of gravity. By a standard bootstrap argument and the prior estimates we made in Section 2, Theorem \ref{thm main} and Theorem \ref{thm long time} follow directly as in \cite{MaP1},  \cite{Jer} and \cite{MaP},   and we will give full details for completeness. In Section 3 we consider the initial vorticity of the self-similar form
\begin{equation*}\om_{0,\e}(x)=\sum_{m=1}^M \frac{\gamma_m}{\e^2} \eta(\frac{x-p_m^0}{\e}).\end{equation*}
We will show that this kind of initial vorticity admits a decomposition of the form
\[
\om_{0,\e}(x)=\sum_{m=1}^M \omega_{0m,\e}(x)+\omega_{0p ,\e}(x),
\]
which satisfies all the assumptions in Theorem \ref{thm main}.
So the concentration property of the vorticity follows directly from Theorem \ref{thm main}.

\subsection{Outline of the paper.}
In Section 2 we consider a single vortex with a background flow and obtain some prior estimates. In Section 3.1 we discuss the stability of the point vortices in Euler flows and prove Theorem \ref{thm main}. In Section 3.2, we consider the Euler equation with self-similar initial data and prove Theorem \ref{thm concentration of mass}. In Section 4 we discuss the long time behavior for the point vortex dynamics and prove Theorem \ref{thm long time}. In Section 5 we discuss some possible generalizations for the point vortices in a smooth bounded domain.

\section{Control of each point vortex. }
\subsection{Decomposition of the vorticity.}
Instead of considering equation \eqref{eq perturb}, we first consider more generally the following equation:
\begin{equation}\begin{cases}\begin{aligned}\label{eq euler2}
\om_t+u \cdot \nabla \om&=0 \,\,\,\,\,\,\,\, \text{in}\,\, \R^2 \times [0,+\infty) \\
\om(\cdot,0)&=\om_0=\sum_{m=1}^M \om_{0m} \,\,\,\,\, \text{in}\,\, \R^2,
\end{aligned}\end{cases}\end{equation}
where
\[
u(x,t)=\frac{1}{2\pi}\int \frac{(x-y)^{\perp}}{|x-y|^2}\om(y,t)\,dy
\]
as usual.

\begin{definition}
Let $F:\R^2 \times [0,T)$ satisfy $\nabla \cdot F=0$. Then we say that $\om$ satisfies
\[\begin{cases}
\partial_t \om+F \cdot \nabla \om=0 \\
\om(\cdot,0)=u_0
\end{cases}\]
in weak sense if for any $\phi \in C^1 \left([0,T),C_c^2(\R^2)\right)$
\[
\frac{d}{dt} \int \om(x,t)\phi(x,t) \,dx=\int \om(x,t)\left(\partial_t \phi +F \cdot \nabla \phi \right) \,dx.
\]
\end{definition}
\begin{lemma}\label{lemma euler decomposition}(Decomposition of the vorticity)
Let $u$ be the unique Yudovich solution of equation \eqref{eq euler2}. Assume $\om_{0m}\in L^1 \cap L^{\infty}$ for $m=1, ..., M$. Define $$\om_{m}(X(\alpha,t),t):=\om_{0m}(\alpha),$$ where $X(\alpha,t)$ satisfies the differential equation
\begin{equation*}\begin{cases}
\frac{dX(\alpha,t)}{dt}=u(X(\alpha,t),t) \\
X(\alpha,0)=\alpha.
\end{cases}\end{equation*}

 Then $\om_m(\cdot,t) $ is a rearrangement of $\om_{0m}(\cdot)$ and satisfies the following equation in weak sense:
\begin{equation}\begin{cases}\label{eq euler decomposition}
\partial_t \om_m+(\sum_{m=1}^M u_m) \cdot \nabla \om_m=0 \\
\om_m(\cdot,0)=\om_{0m},
\end{cases}\end{equation}
where
$$u_m(x,t)=\frac{1}{2\pi} \int \frac{(x-y)^{\perp}}{|x-y|^2}\om_m(y,t)\,dy.$$
\end{lemma}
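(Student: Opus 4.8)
The plan is to define everything through the flow map $X(\alpha,t)$ of the full Yudovich velocity $u$ and then verify the three claims—rearrangement, the weak equation, and consistency with the full solution—one at a time. The essential structural observation is that all the $\om_m$ are transported by the \emph{same} velocity field $u$, namely the velocity generated by the full vorticity $\om = \sum_m \om_m$. This is what decouples the bookkeeping: the flow map $X(\cdot,t)$ is a single object, and we simply relabel the initial data. So I would begin by recording that, since $u$ is the Yudovich velocity, $X(\cdot,t)$ is a well-defined measure-preserving diffeomorphism for each $t$ (this is exactly the property quoted in the introduction), and that $\om_m(\cdot,t) = \om_{0m}\circ X(\cdot,t)^{-1}$ is therefore a measurable function.

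The rearrangement claim is then immediate: because $X(\cdot,t)$ is measure preserving, for every Borel set $E\subset\R$ we have
\[
\big|\{x : \om_m(x,t)\in E\}\big| = \big|\{\alpha : \om_{0m}(\alpha)\in E\}\big|,
\]
so the distribution function of $\om_m(\cdot,t)$ coincides with that of $\om_{0m}$; in particular $\|\om_m(\cdot,t)\|_r = \|\om_{0m}\|_r$ for every $r\in[1,\infty]$, and $\om_m(\cdot,t)\in L^1\cap L^\infty$ uniformly in $t$. I would also note here the crucial consistency identity
\[
\sum_{m=1}^M \om_m(x,t) = \om(x,t),
\]
which holds because $\sum_m \om_{0m}=\om_0$ and all $\om_m$ are pushed forward by the common map $X(\cdot,t)$; consequently the velocity $\sum_m u_m$ appearing in \eqref{eq euler decomposition} equals $u$ by linearity of the Biot--Savart law, so the transport field in the claimed equation for $\om_m$ is exactly $u$.

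For the weak formulation I would fix $\phi\in C^1\!\left([0,T),C_c^2(\R^2)\right)$ and compute $\frac{d}{dt}\int \om_m(x,t)\phi(x,t)\,dx$ by changing variables $x=X(\alpha,t)$. Since $X$ is measure preserving the Jacobian is $1$, so this integral equals $\int \om_{0m}(\alpha)\,\phi(X(\alpha,t),t)\,d\alpha$, and now only $\phi$ depends on $t$. Differentiating under the integral sign and using $\partial_t X(\alpha,t)=u(X(\alpha,t),t)$ gives
\[
\frac{d}{dt}\int \om_m\phi\,dx=\int \om_{0m}(\alpha)\Big(\partial_t\phi+u\cdot\nabla\phi\Big)\big(X(\alpha,t),t\big)\,d\alpha,
\]
and changing variables back to $x$ yields $\int \om_m\big(\partial_t\phi+u\cdot\nabla\phi\big)\,dx$. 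Replacing $u$ by $\sum_m u_m$ via the consistency identity produces precisely the weak form in the definition, establishing \eqref{eq euler decomposition}.

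The main obstacle is analytic rather than algebraic: for a general Yudovich velocity $u$ is only log-Lipschitz, so the trajectory map is merely Hölder continuous and one cannot differentiate under the integral sign naively. I expect to justify the interchange by a regularization argument—mollify $\om_0$ (hence $u$) so that $X$ is genuinely $C^1$ in $(\alpha,t)$, carry out the computation above in the smooth setting where every step is classical, and then pass to the limit using the uniform $L^1\cap L^\infty$ bounds and stability of Yudovich solutions to recover the weak identity for the original data. A secondary point to handle carefully is the bootstrap consistency that $X$ built from the Yudovich $u$ transports each $\om_{0m}$ correctly and simultaneously reproduces $\om$ itself, which follows from uniqueness of the Yudovich solution once the sum identity is in place.
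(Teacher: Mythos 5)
Your argument is correct and follows essentially the same route as the paper: change variables along the flow map, differentiate in time, and use the identity $\sum_{m}\om_m=\om$ together with linearity of the Biot--Savart law to replace $u$ by $\sum_m u_m$ in the weak formulation. The only divergence is your closing regularization step, which is unnecessary: the log-Lipschitz character of $u$ only degrades the regularity of $X(\alpha,t)$ in $\alpha$, while each trajectory $t\mapsto X(\alpha,t)$ is still $C^1$ in $t$ with $\big|\tfrac{d}{dt}\phi(X(\alpha,t),t)\big|\le \|\partial_t\phi\|_{\infty}+\|u\|_{\infty}\|\nabla\phi\|_{\infty}$, so differentiation under the integral sign is justified directly by dominated convergence, the dominating function being $C|\om_{0m}(\alpha)|\in L^1$.
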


\begin{proof}
Since $X(\cdot,t):\R^2 \to \R^2$ is a measure preserving diffeomorphism, so we see directly that $\om_m(\cdot,t) $ is a rearrangement of $\om_{0m}(\cdot)$. Thus, it remains to prove that $\om_{m}(x,t)$ satisfies equation \eqref{eq euler decomposition} in weak sense. Fix $\phi \in C^1([0,T),C_c^2(\R^2))$, since the map $x \to X(\alpha,t)$ is measure preserving, we make the change of variable $x\to X(\alpha,t)$ and compute that
\[\begin{aligned}\frac{d}{dt}\int \om_m(x,t)\phi(x,t)\,dx &=\frac{d}{dt} \int \om_{0m}(\alpha)\phi(X(\alpha,t),t) \, d\alpha \\
&=\int
\om_{0m}(\alpha) \left(\partial_t \phi+u \cdot \nabla \phi\right)(X(\alpha,t),t) \,d\alpha \\
&=\int \om_m(x,t)\left(\partial_t \phi+u \cdot \nabla \phi\right)(x,t) \,dx.
\end{aligned}\]
Then by the vorticity transport formula \eqref{eq transport invariance} and the definition of $\om_m(x)$, we see that the solution $\om(x,t)$ of equation \eqref{eq euler2} is exactly $\sum_{m=1}^M \om_m(x,t)$. Therefore,
\[\begin{aligned}
u(x,t)&=\frac{1}{2\pi} \int \frac{(x-y)^{\perp}}{|x-y|^2}\om(y,t)\,dy \\
&=\frac{1}{2\pi} \int \frac{(x-y)^{\perp}}{|x-y|^2}(\sum_{m=1}^M\om_m(y,t))\,dy \\
&=\sum_{m=1}^M u_m(x,t) .
\end{aligned}\]
Thus we obtain
\begin{equation}\label{eq integration by part}\begin{aligned}
\frac{d}{dt}\int \om_m(x,t)\phi(x,t)\,dx &=\int \om_m(x,t)\left(\partial_t \phi+(\sum_{m=1}^M u_m) \cdot \nabla \phi\right)(x,t) \,dx
\end{aligned}\end{equation}

which shows that $\om_m(x,t)$ satisfies \eqref{eq euler decomposition} in the weak sense.
\end{proof}

  In order to control the velocity field $u(x,t)$ by its vorticity $\om(x,t)$, we prove the following estimates.
\begin{lemma}\label{lemma control of the velocity}
Assume $f \in L^1 \cap L^q$ for some $q>2$. Then we get
$$\Big|\frac{1}{2\pi} \int \frac{(x-y)^{\perp}}{|x-y|^2}f(y)\,dy\Big| \lesssim_q \|f\|_1^{\frac{q-2}{2q-2}} \|f\|_q^{\frac{q}{2q-2}} $$
\end{lemma}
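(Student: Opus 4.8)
The plan is to exploit the fact that the Biot--Savart kernel $K(z)=\frac{1}{2\pi}\frac{z^{\perp}}{|z|^2}$ satisfies the pointwise bound $|K(z)|=\frac{1}{2\pi|z|}$, so that the quantity to estimate is dominated by $\frac{1}{2\pi}\int \frac{|f(y)|}{|x-y|}\,dy$. First I would split this integral at a radius $R>0$ to be chosen later, writing it as the sum of the contribution from the ball $\{|x-y|<R\}$, where the kernel is singular, and the contribution from $\{|x-y|\ge R\}$, where it is bounded.

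On the far region the kernel is at most $R^{-1}$, so that piece is controlled by $\frac{1}{R}\|f\|_1$. On the near region I would apply H\"older's inequality with the conjugate exponent $q'=\frac{q}{q-1}$, which gives $\|f\|_q\big(\int_{|x-y|<R}|x-y|^{-q'}\,dy\big)^{1/q'}$. The hypothesis $q>2$ is used exactly here: it forces $q'<2$, so that $|z|^{-q'}$ is locally integrable at the origin and the radial integral $\int_0^R r^{1-q'}\,dr$ converges, producing a bound of the form $C_q\,\|f\|_q R^{(q-2)/q}$.

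Combining the two pieces yields the estimate $\frac{1}{R}\|f\|_1 + C_q\|f\|_q R^{(q-2)/q}$. The final step is to optimize in $R$: balancing the two terms, i.e. choosing $R$ proportional to $(\|f\|_1/\|f\|_q)^{q/(2q-2)}$, makes both contributions of the same order and reproduces exactly the claimed exponents $\frac{q-2}{2q-2}$ on $\|f\|_1$ and $\frac{q}{2q-2}$ on $\|f\|_q$. I do not expect any genuine obstacle here; the only points requiring care are verifying the integrability condition $q'<2$, which guarantees finiteness of the near-field integral, and tracking the constants through the optimization, both of which are routine.
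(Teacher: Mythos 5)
Your proposal is correct and follows essentially the same argument as the paper: bound the kernel by $\frac{1}{2\pi|x-y|}$, split at a radius $R$, apply H\"older's inequality on $B_R(x)$ (using $q>2$ so that $q'<2$ makes $|z|^{-q'}$ locally integrable), bound the far field by $\|f\|_1/R$, and optimize $R\sim(\|f\|_1/\|f\|_q)^{q/(2q-2)}$, which is exactly the paper's choice $R=(\|f\|_1/\|f\|_q)^{q'/2}$. No gaps.
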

\begin{proof}
We assume that $f \neq 0$, then for any $R>0$, H\"{o}lder's inequality gives
\[\begin{aligned}
\big|\frac{1}{2\pi} \int \frac{(x-y)^{\perp}}{|x-y|^2}f(y)\,dy\big| \lesssim&  \int \frac{1}{|x-y|}|f(y)|\,dy \\
\lesssim&  \int_{B_R(x)} \frac{1}{|x-y|}|f(y)|\,dy+\int_{B_R(x)^c} \frac{1}{|x-y|}|f(y)|\,dy \\
\lesssim& \|f\|_q \int_{B_R(x)} |x-y|^{-q'} \,dy^{\frac{1}{q'}}+\frac{\|f\|_1}{R} \\
\lesssim& (\frac{1}{2-q'})^{\frac{1}{q'}}R^{\frac{2-q'}{q'}}\|f\|_q + \frac{\|f\|_1}{R} \\
\lesssim&_q R^{\frac{2-q'}{q'}}\|f\|_q + \frac{\|f\|_1}{R},
\end{aligned}\]
where $q'$ satisfies $\frac{1}{q'}=1-\frac{1}{q}$. Choosing $R=(\frac{\|f\|_1}{\|f\|_q})^{\frac{q'}{2}}$, then we get the desired inequality.
\end{proof}

\begin{remark}\label{remark compact integration by part}
Assume $\om_{0m}$ has compact support for some $m$, then by Lemma \ref{lemma control of the velocity} and the vorticity transport formula \eqref{eq transport invariance}, we see that the support of $\om_m(\cdot,t)$ remains bounded for any time $t <+\infty$. So \eqref{eq integration by part} holds for any function $\phi \in C^1([0,+\infty),C^2(\R^2))$ .
\end{remark}

After the above preliminary work, we now consider the Euler equation \eqref{eq perturb}
\[\begin{cases}
\partial_t \omega_{\e}+u_{\e}\cdot \nabla \omega_{\e} =0 \\
\omega_{\e}(x,0)=\om_{0\e}(x)=\sum_{m=1}^M \omega_{0m,\e}(x)+\omega_{0p ,\e}(x).
\end{cases}\]
 Apply Lemma \ref{lemma euler decomposition} to $\om_0(x)=\sum_{m=1}^M \omega_{0m,\e}(x)+\omega_{0p ,\e}(x)$, then $\om_{\e}(x,t)$ admits a decomposition
 \[
 \om_{\e}(x,t)=\sum_{m=1}^M\om_{m,\e}(x,t)+\om_{p,\e}(x,t),
 \]
 where $\om_{m,\e}(x,t)$ satisfies
\begin{equation}\begin{cases}\begin{aligned}\label{eq single vortex1}
\partial_t \om_{m,\e}+(\sum_{m=1}^M u_{m,\e}+u_{p,\e}) \cdot \nabla \om_{m,\e}&=0 \\
\om_{m,\e}(\cdot,0)&=\om_{0m,\e}
\end{aligned}\end{cases}\end{equation}
for $m=1, ..., M$ and $\om_{p,\e}(x,t)$ satisfies
\begin{equation*}\begin{cases}\begin{aligned}
\partial_t \om_{p,\e}+(\sum_{m=1}^M u_{m,\e}+u_{p,\e}) \cdot \nabla \om_{p,\e}&=0 \\
\om_{p,\e}(\cdot,0)&=\om_{0p,\e}.
\end{aligned}\end{cases}\end{equation*}

Again by Lemma \ref{lemma euler decomposition}, $\om_{p,\e}(\cdot,t)$ is a rearrangement of $\om_{0p,\e}(\cdot)$. Therefore,
$$\|\om_{p,\e}(\cdot,t)\|_r=\|\om_{0p,\e}\|_r$$
for any $1\le r \le +\infty$. Finally, we apply Lemma \ref{lemma control of the velocity} to $f=\om_{p,\e}$, and obtain
\begin{equation*}
\|u_{p,\e}\|_{\infty} \lesssim_q \|\om_{0p,\e}\|_1^{\frac{q-2}{2q-2}} \|\om_{0p,\e}\|_1^{\frac{q}{2q-2}} \lesssim_q A_{\e},
\end{equation*}
where
$$A_{\e}= \max \left\{   \|\om_{0p,\e}\|_q^{\frac{q}{2q-2}}\| \om_{0p,\e}\|_1^{\frac{q-2}{2q-2}}, \e \right\}$$ is defined in section 1. With the above estimates on $\om_{p,\e}(x,t)$, next we aim to show that equation \eqref{eq single vortex1} can be written in a form like \eqref{eq section1 single vortex MaP1}. We first prove a technical lemma:
\begin{lemma}\label{lemma lipschitz property}
Let $\delta>0$ be a positive number. Assume $\om(x) \in L^1 \cap L^{\infty}$ and $\text{supp}(\om) \subset B_{\delta/4}(p)$ for some point $p \in \R^2$, then for any $x\in \R^2$ such that $|x-p| \ge \delta$, we have
$$|\nabla u(x) | \le \frac{C}{\delta^2} \| \om \|_1,$$
where
$$u(x)=\frac{1}{2\pi} \int \frac{(x-y)^{\perp}}{|x-y|^2} \om(y)\,dy.$$
\end{lemma}
\begin{proof} Due to the support of $\om$, we have $|x-y|\ge \frac{\delta}{2}$ when $|x-p| \ge \delta$ and $y \in \text{supp}(\om)$. Thus,
\[\begin{aligned}
|\nabla u(x)| \lesssim& \int \frac{|\om(y)|}{|x-y|^2}\,dy=\int_{B_{\frac{\delta}{4}}(p)} \frac{|\om(y)|}{|x-y|^2}\,dy \lesssim \frac{1}{\delta^2} \|\om\|_1.
\end{aligned}\]
\end{proof}

Assume that $\om_m(\cdot,t)$ is supported in a small ball centered at the point $p_m(t)$, then equation \eqref{eq single vortex1} can be rewritten as
\begin{equation*}\begin{cases}\begin{aligned}
\partial_t \om_{m,\e}+(u_{m,\e}+F_{1,m,\e}+F_{2,m,\e}) \cdot \nabla \om_{m,\e}=0 \\
\om_{m,\e}(\cdot,0)=\om_{0m,\e}
\end{aligned}\end{cases}\end{equation*}

where $F_{2,m,\e}(\cdot,t)=u_{p,\e}(\cdot,t)$ is a small perturbation term and $$F_{1,m,\e}(\cdot,t)=\sum_{l\neq m}u_{l,\e}(\cdot,t)$$ is Lipschitz continuous on the support of $\om_{m,\e}(\cdot,t)$ by Lemma \ref{lemma lipschitz property}. The Lipschitz constant can be controlled independent of the time $t$ since $\|\om_{l,\e}(\cdot,t)\|_r$ is a conserved quantity for any $l=1, ..., M$ and $r\ge 1$.

\subsection{A single vortex with a background flow.}
Now we consider more generally the following problem:
Let $\om_{\e}(\cdot,t) \in L^1 \cap L^{\infty}$ be a rearrangement of $\om_{0,\e}$ and satisfies
\begin{equation}\begin{cases}\begin{aligned}\label{eq single vortex}
\partial_t \om_{\e}+(u_{\e}+F_{1,\e}+F_{2,\e}) \cdot \nabla \om_{\e}=0 \\
\om_{\e}(\cdot,0)=\om_{0,\e}
\end{aligned}\end{cases}\end{equation}
in weak sense. That is, for any smooth function $\phi(x,t)$,
\begin{equation}\label{eq integration by parts1}
\frac{d}{dt}\int \om_{\e}(x,t)\phi(x,t)\,dx=\int \om_{\e}(x,t)\left( \partial_t\phi(x,t)+(u_{\e}+F_{1,\e}+F_{2,\e})\cdot \nabla \phi(x,t) \right) \,dx.
\end{equation}

We assume that
\begin{enumerate}
\item $|F_{1,\e}(x,t)-F_{1,\e}(y,t)| \le L|x-y|$ for $L$ a positive number independent of $\e$ and $t$.
\item
$\|F_{2,\e}\|_{\infty}\to 0 \quad \text{as $\e \to 0$}.$
\item The vorticity transport formula holds:
$$\om_{\e}(X(\alpha,t),t)=\om_{0,\e}(\alpha),$$
where $X(\alpha,t)$ satisfies
\[\begin{cases}
\frac{dX(\alpha,t)}{dt}=(u_{\e}+F_{1,\e}+F_{2,\e})(X(\alpha,t),t) \\
X(\alpha,0)=\alpha.
\end{cases}\]
\item For any time $t$, the map $\alpha \to X(\alpha,t)$ is measure preserving.

\end{enumerate}
Then we have

\begin{theorem}\label{thm section2 main}
Assume $\om_{0,\e}$ has compact support and belongs to $L^1\cap L^{\infty}$. Let $\om_{\e}$, $F_{1,\e}$ and $F_{2,\e}$ satisfy the condition above. We assume also that $\om_{0,\e}$ does not change sign. Let $\Omega_{\e} :=\int_{\R^2}\om_{0,\e}\,dx$ and define
\begin{equation*}
p_{\e}(t)=\frac{1}{\Omega_{\e}}\int x \om_{\e}(x,t)\,dx,
\end{equation*}
\begin{equation*}
I_{\e}(t)=\frac{1}{2\Omega_{\e}}\int|x-p_{\e}(t)|^2 \om_{\e}(x,t)\,dx.
\end{equation*}
Let $P(t)$ solve
$$\frac{dP}{dt}=F_{1,\e}(P(t),t).$$
Then we have
\begin{equation}\label{eq section2 control of second momentum}
I_{\e}(t) \le 2e^{2Lt}\left[ I_{\e}(0)+\frac{\|F_{2,\e}\|_{\infty}^2}{2}(\int_0^t e^{-Ls}\,ds)^2\right]
\end{equation}
and
\begin{equation}\begin{aligned}\label{eq section2 control of center of gravity}
|p_{\e}(t)-P(t)|\le e^{Lt} \Bigg[ |p_{\e}(0)-P(0)|&+2L(\sqrt{I_{\e}(0)}+\frac{\|F_{2,\e}\|_{\infty}}{\sqrt{2}L})&\int_0^t \int_0^r e^{-Ls }\,ds\,dr \\&+\|F_{2,\e}\|_{\infty}\int_0^t e^{-Ls}\,ds \Bigg].
\end{aligned}\end{equation}
\end{theorem}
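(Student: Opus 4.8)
The plan is to track the evolution of the first and second moments of $\om_{\e}(\cdot,t)$ and to exploit the fact that the self-induced velocity $u_{\e}$ neither moves the center of gravity nor spreads the vortex, so that only the background fields $F_{1,\e}$ and $F_{2,\e}$ drive $p_{\e}$ and $I_{\e}$. Since $\om_{0,\e}$ does not change sign I may assume $\om_{\e}\ge 0$ and $\Omega_{\e}>0$ (otherwise replace $\om_{\e}$ by $-\om_{\e}$, which leaves $p_{\e}$ and $I_{\e}$ unchanged). Because $\om_{0,\e}$ is compactly supported, $\om_{\e}(\cdot,t)$ stays compactly supported for every finite $t$ by Lemma~\ref{lemma control of the velocity} and Remark~\ref{remark compact integration by part}, so the weak formulation \eqref{eq integration by parts1} is legitimate for polynomial test functions such as $\phi(x)=x_i$ and $\phi(x,t)=\tfrac12|x-p_{\e}(t)|^2$; this is what makes the moment computations rigorous.

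For the second moment I would apply \eqref{eq integration by parts1} with the time-dependent test function $\phi(x,t)=\tfrac12|x-p_{\e}(t)|^2$, so that $\partial_t\phi=-(x-p_{\e})\cdot\dot p_{\e}$ and $\nabla\phi=x-p_{\e}$. Since $\int\om_{\e}(x-p_{\e})\,dx=0$ by the definition of $p_{\e}$, the $\dot p_{\e}$-contribution drops out. The self-induced term also vanishes: writing $u_\e$ through the Biot--Savart kernel and symmetrizing in $x\leftrightarrow y$,
\[
\int \om_\e\, u_\e\cdot(x-p_\e)\,dx=\frac{1}{4\pi}\iint \frac{(x-y)^\perp}{|x-y|^2}\cdot(x-y)\,\om_\e(x)\om_\e(y)\,dx\,dy=0,
\]
because $(x-y)^\perp\perp(x-y)$. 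Hence $\dot I_\e=\Omega_\e^{-1}\int\om_\e(F_{1,\e}+F_{2,\e})\cdot(x-p_\e)\,dx$. Subtracting the constant $F_{1,\e}(p_\e,t)$ (which integrates to zero against $(x-p_\e)\om_\e$) and using the Lipschitz bound (i) gives a contribution $\le 2LI_\e$, while Cauchy--Schwarz gives $\int\om_\e|x-p_\e|\,dx\le\sqrt2\,\Omega_\e\sqrt{I_\e}$; together these yield $\dot I_\e\le 2LI_\e+\sqrt2\,\|F_{2,\e}\|_\infty\sqrt{I_\e}$. Setting $y=\sqrt{I_\e}$ turns this into the linear inequality $\dot y\le Ly+\|F_{2,\e}\|_\infty/\sqrt2$ (justified by approximating with $y_\delta=\sqrt{I_\e+\delta}$ to avoid dividing by zero), and Gronwall followed by $(a+b)^2\le2(a^2+b^2)$ delivers \eqref{eq section2 control of second momentum}.

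For the center of gravity I would test \eqref{eq integration by parts1} with $\phi(x)=x$. The self-induced term $\int\om_\e u_\e\,dx=\frac1{2\pi}\iint\frac{(x-y)^\perp}{|x-y|^2}\om_\e(x)\om_\e(y)\,dx\,dy$ vanishes by antisymmetry of the kernel, leaving $\dot p_\e=\Omega_\e^{-1}\int\om_\e(F_{1,\e}+F_{2,\e})\,dx$. Comparing with $\dot P=F_{1,\e}(P,t)$ and splitting $F_{1,\e}(x)-F_{1,\e}(P)=[F_{1,\e}(x)-F_{1,\e}(p_\e)]+[F_{1,\e}(p_\e)-F_{1,\e}(P)]$, the Lipschitz bound together with Cauchy--Schwarz gives $|\dot p_\e-\dot P|\le L|p_\e-P|+\sqrt2\,L\sqrt{I_\e}+\|F_{2,\e}\|_\infty$. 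A final Gronwall argument for $g(t)=|p_\e(t)-P(t)|$, into which one substitutes the bound for $\sqrt{I_\e(t)}$ obtained in the previous step, produces the iterated time integrals and yields \eqref{eq section2 control of center of gravity} (after the harmless estimate $\sqrt2\le2$ and elementary rearrangement of the resulting integrals).

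The substantive point — and the only place where the structure of the Euler nonlinearity really enters — is the pair of cancellations of the self-induced velocity: its integral against $\om_\e$ and against $(x-p_\e)\om_\e$ both vanish by the antisymmetry of $\frac{(x-y)^\perp}{|x-y|^2}$, which is exactly what decouples $I_\e$ and $p_\e$ from the a priori singular self-interaction and reduces the dynamics to the Lipschitz field $F_{1,\e}$ and the small field $F_{2,\e}$. The remaining obstacles are routine: justifying the polynomial test functions through compact-support propagation, and handling the square-root substitution in the $I_\e$ inequality near $I_\e=0$.
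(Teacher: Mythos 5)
Your proposal is correct and follows essentially the same route as the paper's proof: the same moment computations via the weak formulation, the same antisymmetry cancellations of the Biot--Savart kernel against $\om_\e$ and $(x-p_\e)\om_\e$, the same Lipschitz/Jensen splitting, and the same two Gr\"{o}nwall arguments (with your $\sqrt{I_\e+\delta}$ regularization and the compact-support justification of polynomial test functions being careful touches the paper leaves implicit).
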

\begin{proof}
Let $K(z)=\frac{1}{2\pi} \frac{z^{\perp}}{|z|^2}$ be the Biot-Savart kernel, then the antisymmetry of $K$ implies
\begin{equation}\label{eq section2 antisymmetry}
\int_{\R^2} u_{\e} \om_{\e} \,dx=\int_{\R^2 \times \R^2} K(x-y)\om_{\e}(x,t)\om_{\e}(y,t) \,dxdy=0.
\end{equation}
Similarly, since $z \cdot K(z)=0$, we obtain
\begin{equation}\begin{aligned}\label{eq section2 antisymmetry2}
\int x \cdot u_{\e}(x,t)\om_{\e}(x,t)\,dx &= \int_{\R^2 \times \R^2} x \cdot K(x-y)\om_{\e}(x,t)\om_{\e}(y,t) \,dxdy \\
&= \frac{1}{2}\int_{\R^2 \times \R^2} (x-y)\cdot K(x-y)\om_{\e}(x,t)\om_{\e}(y,t) \,dxdy=0.
\end{aligned}\end{equation}
By definition of $p_{\e}(t)$, we note that for any vector $\nu(t)$,
\begin{equation}\label{eq section2 antisymmetry3}
\frac{1}{\Omega_{\e}}\int \nu(t) \cdot (x-p_{\e}(t))\om_{\e}(x,t) \,dx=\nu(t)\cdot \left(p_{\e}(t)-p_{\e}(t)\right)=0.
\end{equation}
  Then due to \eqref{eq integration by parts1}, \eqref{eq section2 antisymmetry3}, \eqref{eq section2 antisymmetry}, \eqref{eq section2 antisymmetry2}, we get
\[\begin{aligned}
\frac{dI_{\e}(t)}{dt}&=\frac{1}{\Omega_{\e}}\int(x-p_{\e}(t))\cdot\left(-\frac{dp_{\e}(t)}{dt}+u_{\e}(x,t)+F_{1,\e}(x,t)+F_{2,\e}(x,t)\right)\om_{\e}(x,t)\,dx \\
&=\frac{1}{\Omega_{\e}}\int(x-p_{\e}(t))\cdot\left(u_{\e}(x,t)+F_{1,\e}(x,t)+F_{2,\e}(x,t)\right)\om_{\e}(x,t)\,dx \\
&=\frac{1}{\Omega_{\e}}\int(x-p_{\e}(t))\cdot\left(F_{1,\e}(x,t)+F_{2,\e}(x,t)\right)\om_{\e}(x,t)\,dx \\
&=\frac{1}{\Omega_{\e}}\int(x-p_{\e}(t))\cdot\left(F_{1,\e}(x,t)-F_{1,\e}(p_{\e}(t),t)\right)\om_{\e}(x,t)\,dx \\
& \,\,\,\,\,\,+ \frac{1}{\Omega_{\e}}\int(x-p_{\e}(t))\cdot F_{2,\e}(x,t)\om_{\e}(x,t)\,dx :=A_{1,\e}+A_{2,\e}.
\end{aligned}\]
Next by Lipschitz property of $F_{1,\e}$, we see that
$$
|A_{1,\e}| \le \frac{L}{\Omega_{\e}} \int |x-p_{\e}(t)|^2 \om_{\e}(x,t)\,dx=2LI_{\e}(t)
$$
and as a consequence of Jensen's inequality,
\[\begin{aligned}
|A_{2,\e}| &\le \|F_{2,\e}\|_{\infty}\int |x-p_{\e}(t)| \frac{\om_{\e}(x,t)}{\Omega_{\e}} \,dx \\
&\le \|F_{2,\e}\|_{\infty}\bigg(\int |x-p_{\e}(t)|^2 \frac{\om_{\e}(x,t)}{\Omega_{\e}} \,dx\bigg)^{\frac12} \\
&=\|F_{2,\e}\|_{\infty} \sqrt{2I_{\e}(t)}.
\end{aligned}\]
Thus we obtain
\[
\frac{dI_{\e}(t)}{dt}\le 2LI_{\e}(t)+\|F_{2,\e}\|_{\infty} \sqrt{2I_{\e}(t)},
\]
which implies
\[
\frac{d\sqrt{I_{\e}(t)}}{dt}\le L\sqrt{I_{\e}(t)}+\frac{1}{\sqrt{2}}\|F_{2,\e}\|_{\infty}
\]
and the bound for $I_{\e}(t)$ follows directly by Gr\"{o}nwall's inequality.
Next we compute $\frac{d}{dt}|p_{\e}(t)-P(t)|$. Due to \eqref{eq integration by parts1} and \eqref{eq section2 antisymmetry}, we get
\[\begin{aligned}
\frac{dp_{\e}(t)}{dt}&=\frac{1}{\Omega_{\e}}\int (u_{\e}(x,t)+F_{1,\e}(x,t)+F_{2,\e}(x,t))\om_{\e}(x,t)\,dx  \\
&=\frac{1}{\Omega_{\e}} \int F_{1,\e}(x,t)\om_{\e}(x,t)\,dx+\frac{1}{\Omega_{\e}} \int F_{2,\e}(x,t)\om_{\e}(x,t)\,dx.
\end{aligned}\]
So by definition of $P(t)$, we obtain
\[\begin{aligned}
\frac{d}{dt}\big(p_{\e}(t)-P(t)\big)&=\frac{1}{\Omega_{\e}} \int F_{1,\e}(x,t)\om_{\e}(x,t)\,dx-F_{1,\e}\big(P(t),t\big) +\frac{1}{\Omega_{\e}} \int F_{2,\e}(x,t)\om_{\e}(x,t)\,dx \\
&=F_{1,\e}\big(p_{\e}(t),t\big)-F_{1,\e}\big(P(t),t\big)+\frac{1}{\Omega_{\e}} \int \Big(F_{1,\e}(x,t)-F_{1,\e}\big(p_{\e}(t),t\big)\Big)\om_{\e}(x,t)\,dx \\
&\,\,\,+ \frac{1}{\Omega_{\e}} \int F_{2,\e}(x,t)\om_{\e}(x,t)\,dx \\
&:=T_1+T_2+T_3.
\end{aligned}\]
Thus,
\[
\frac{d}{dt}|p_{\e}(t)-P(t)| \le |\frac{d}{dt}(p_{\e}(t)-P(t))| \le |T_1|+|T_2|+|T_3|.
\]
For $T_1$, the Lipschitz continuity of  $F_{1,\e}$ yields
\[
|T_1| \le L |p_{\e}(t)-P(t)|.
\]
For $T_2$, as a consequence of Jensen's inequality and the Lipschitz property of $F_{1,\e}$, we obtain
\[\begin{aligned}
|T_2| \le& L \int |x-p_{\e}(t)|\frac{\om_{\e}(x,t)}{\Omega_{\e}} \,dx \\
\le& L \left(\int |x-p_{\e}(t)|^2 \frac{\om_{\e}(x,t)}{\Omega_{\e}} \,dx\right)^{\frac12}.
\end{aligned}\]
Then by the definition of $I_{\e}(t)$, we get
\[
|T_2| \le \sqrt{2}L \sqrt{I_{\e}(t)}.
\]
Thus, \eqref{eq section2 control of second momentum} implies
\[
|T_2| \le 2Le^{Lt} \left(\sqrt{I_{\e}(0)}+\frac{\|F_{2,\e}\|_{\infty}}{\sqrt{2}}\right)\int_0^t e^{-Ls} \,ds.
\]
For $T_3$, a direct calculation shows that
\[
|T_3|\le \|F_{2,\e}\|_{\infty}.
\]
Combining the above estimates, equation \eqref{eq section2 control of center of gravity} follows directly from Gr\"{o}nwall's inequality.
\end{proof}
\begin{remark}
Recall that the initial vorticity we considered satisfies $I_{\e}(0)\le A_{\e}^2$ and $|p_{\e}(0)-P(0)| \le A_{\e}$. So Theorem \ref{thm section2 main} tells us that the center of gravity of the vorticity remains concentrated near the point $P(t)$, where $P(t)$ can be viewed as an evolution of the point $P(0)$ under the velocity field $F_{1,\e}$.
\end{remark}
\begin{remark}
The case when $F_{2,\e}=0$ has already been considered in \cite{MaP}, so this Theorem shows that the presence of the perturbation term $F_{2,\e}$ will not change the dynamic of the Euler flow when $\|F_{2,\e}\|_{\infty}$ is small.
\end{remark}

Now we state our main Theorem in this section.
\begin{theorem} \label{thm section main main}
Assume $\om_{0,\e} \in L^{1} \cap L^{\infty}$, $\om_{0,\e}$ does not change sign, $$\text{supp}(\om_{0,\e}) \subset B_{\e}(p(0)) $$
  for some point $p(0)\in \R^2$,
  $$ \int \om_{0, \e} \,dx=\Omega_{\e} \to \Omega \neq 0.$$
and $\om_\e$ satisfies all the assumptions in Theorem \ref{thm section2 main}.
We assume also that $A_{\e} \to 0$ as $\e \to 0$ and there exist some $\gamma >0$ , $p_1>2$ such that $$\| \om_{0,\e}\|_{p_1} \le A_{\e}^{-\gamma},$$
where $A_{\e}$ is defined in section 1 and satisfies
$$ \max \left\{      \e,\|F_{2,\e}\|_{\infty}         \right\}\le CA_{\e}$$
for some $C>0$ independent of $\e$. Then for any $T < T^*$ and $a < \frac12$, there exists $\e_0 >0 $ such that for all $0 < \e \le \e_0$, we have
$$\text{supp}(\om_{\e}(\cdot,t))\subset B_{A_{\e}^{a}}(p_{\e}(t)).$$

\end{theorem}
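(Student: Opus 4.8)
The plan is to reduce the statement to a confinement estimate for the Lagrangian trajectories and then run a continuity (bootstrap) argument in which the only genuinely delicate quantity is the self-induced velocity $u_{\e}$ near the core. I may assume $\om_{0,\e}\ge 0$ (the opposite sign is identical), so $\Omega_{\e}>0$ and all mass/moment quantities are nonnegative. Since $\om_{\e}(X(\alpha,t),t)=\om_{0,\e}(\alpha)$ and $X(\cdot,t)$ is a homeomorphism, $\mathrm{supp}\,\om_{\e}(\cdot,t)=X(\mathrm{supp}\,\om_{0,\e},t)$, so it suffices to show that every trajectory issuing from $B_{\e}(p(0))$ stays inside $B_{A_{\e}^a}(p_{\e}(t))$. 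From Theorem \ref{thm section2 main}, the bounds $I_{\e}(0)\lesssim \e^2\le A_{\e}^2$ and $\|F_{2,\e}\|_{\infty}\lesssim A_{\e}$ give $I_{\e}(t)\le C(T)A_{\e}^2$ on $[0,T]$, and Chebyshev then yields the a priori mass bound
\[
m(R,t):=\int_{|x-p_{\e}(t)|>R}\om_{\e}(x,t)\,dx\le \frac{2\Omega_{\e}I_{\e}(t)}{R^2}\le \frac{C A_{\e}^2}{R^2}.
\]

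For the bootstrap I would set $\tau:=\sup\{t\le T:\ \mathrm{supp}\,\om_{\e}(\cdot,s)\subset B_{A_{\e}^a}(p_{\e}(s))\ \text{for all}\ s\le t\}$. Since initially $\mathrm{supp}\,\om_{0,\e}\subset B_{2\e}(p_{\e}(0))\subset B_{A_{\e}^a}(p_{\e}(0))$ (recall $\e\le A_{\e}$ and $a<1$), we have $\tau>0$. Assuming $\tau<T$, I will show that $\mathrm{supp}\,\om_{\e}(\cdot,\tau)$ in fact lies in a ball of radius $\ll A_{\e}^a$, contradicting the definition of $\tau$. Following the outermost trajectory, write $\rho(t)=|X(\alpha,t)-p_{\e}(t)|$ and $\hat r=(X-p_{\e})/\rho$; differentiating and using $\dot p_{\e}=\Omega_{\e}^{-1}\int(F_{1,\e}+F_{2,\e})\om_{\e}$ (so that $|\dot p_{\e}-F_{1,\e}(p_{\e},t)|\lesssim L\sqrt{I_{\e}}+\|F_{2,\e}\|_{\infty}\lesssim A_{\e}$) gives
\[
\frac{d}{dt}\rho(t)=\hat r\cdot\big(u_{\e}(X,t)+F_{1,\e}(X,t)+F_{2,\e}(X,t)-\dot p_{\e}(t)\big)\le L\rho+CA_{\e}+\hat r\cdot u_{\e}(X,t),
\]
where the Lipschitz bound on $F_{1,\e}$ and $\|F_{2,\e}\|_{\infty}\lesssim A_{\e}$ were used.

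The heart of the matter is the radial self-velocity $\hat r\cdot u_{\e}(X,t)$, which I would control by splitting the Biot--Savart integral at scale $\rho/4$. For the near part $|y-p_{\e}|<\rho/4$ the crucial cancellation is that $\int (y-p_{\e})\om_{\e}\,dy=0$ by the very definition of $p_{\e}$: expanding $|X-y|^{-2}=\rho^{-2}+O(|y-p_{\e}|/\rho^3)$, the $O(\rho^{-2})$ contribution to the radial component vanishes and only the remainder survives, so $|\hat r\cdot u_{\mathrm{near}}|\lesssim \rho^{-3}\int|y-p_{\e}|^2\om_{\e}\lesssim A_{\e}^2/\rho^3$. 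For the far part $|y-p_{\e}|\ge \rho/4$ the mass is only $m(\rho/4)\lesssim A_{\e}^2/\rho^2$, and Lemma \ref{lemma control of the velocity} bounds its velocity. Feeding the dominant near term into the ODE gives $\frac{d}{dt}(\rho^4)\lesssim A_{\e}^2$ with $\rho(0)\lesssim\e$, whence a routine Gr\"onwall/comparison argument (absorbing the harmless $L\rho$ and $CA_{\e}$ terms) yields $\rho(t)\lesssim A_{\e}^{1/2}$ on $[0,\tau]$. Since $a<\tfrac12$, we have $CA_{\e}^{1/2}<A_{\e}^a$ for $\e$ small, the desired contradiction; hence $\tau\ge T$.

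The main obstacle is precisely the far-field self-velocity: the naive combination of $m(\rho/4)\lesssim A_{\e}^2/\rho^2$ with the possibly large conserved norm $\|\om_{\e}\|_{p_1}=\|\om_{0,\e}\|_{p_1}\le A_{\e}^{-\gamma}$ in Lemma \ref{lemma control of the velocity} is not small enough once $\gamma$ is of order one (as it is in the self-similar application of Section 3). The remedy, following \cite{MaP1} and \cite{MaP}, is to first upgrade the polynomial mass estimate: iterating a flux inequality for $\frac{d}{dt}\int\om_{\e}\,\psi_R(x-p_{\e})\,dx$ and exploiting the antisymmetry of the Biot--Savart kernel (which turns the singular self-interaction across the annulus $\{R\le|x-p_{\e}|\le 2R\}$ into a harmless $R^{-2}m(R)$ term) yields, for each $N$, a bound $m(R,t)\le C_N (A_{\e}/R)^{2N}$ in the relevant range of $R$. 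Choosing $N$ large depending on $\gamma$ makes the far-field velocity negligible against $A_{\e}^2/\rho^3$, closing the estimate for every $a<\tfrac12$. Establishing this iterated mass decay cleanly is the technically heaviest step; everything else is Gr\"onwall bookkeeping.
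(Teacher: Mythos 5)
Your proposal follows essentially the same route as the paper's proof of Theorem \ref{thm section main main}: second-moment and center-of-mass control from Theorem \ref{thm section2 main}, an iterated flux inequality (the paper's Lemma \ref{lemma section2 main} and Step 1) to upgrade the Chebyshev mass bound $m(R,t)\lesssim A_{\e}^2/R^2$, and a trajectory ODE with a near/far splitting of the Biot--Savart integral (the paper's Step 2, via Lemmas \ref{lemma control of the radial component of velocity}--\ref{lemma velocity out of singularity}), where the far field is handled by Lemma \ref{lemma control of the velocity} together with the upgraded outer-mass bound so as to beat $\|\om_{0,\e}\|_{p_1}\le A_{\e}^{-\gamma}$. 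You correctly isolate this last point as the crux. One quantitative caveat: the iterated flux inequality does \emph{not} deliver $m(R,t)\le C_N(A_{\e}/R)^{2N}$ from a fixed number $N$ of iterations; after $k$ iterations one only gets
\begin{equation*}
\mu_t(2^k\sqrt{A_{\e}})\le |\Omega_{\e}|\,\frac{(Ct)^k}{k!},
\end{equation*}
which for fixed $k$ is merely a constant, not small in $\e$. The smallness comes from taking $k\sim|\log A_{\e}|$ iterations, so that Stirling's formula ($k!\sim k^{k}$) beats $C^k$; this is exactly the paper's choice $k=\left\lfloor (\tfrac12-\beta)|\log A_{\e}|/\log 2\right\rfloor-1$, which yields $\mu_t(A_{\e}^{\beta})\le A_{\e}^{\gamma_1}$ for every fixed $\gamma_1$, with $\e_0$ depending on $\gamma_1$ — the same strength of bound you need, so your argument closes once this bookkeeping is corrected. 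Finally, the bootstrap time $\tau$ you introduce is harmless but unnecessary in this theorem, since $F_{1,\e}$ is assumed globally Lipschitz here; the continuity argument only becomes essential in Section 3, where the Lipschitz bound on $F_{1,m,\e}$ holds merely in a neighborhood of the supports.
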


\begin{remark}
Indeed, we do not need $\Omega_{\e}$ converges to $\Omega$ as $\e \to 0$. We will prove the Theorem under the assumption that $\frac{1}{C}< |\Omega_{\e}| <C$ for some $C>0$.
\end{remark}
Before proving the theorem, we give some technical lemma first. Lemma \ref{lemma control of the radial component of velocity}, \ref{lemma velocity out of singularity} and \ref{lemma radial component of velocity} are due to Marchioro and Pulvirenti \cite{MaP}. See also \cite{Jer} for details.
\begin{lemma}[\cite{Jer}, \cite{MaP}]\label{lemma control of the radial component of velocity}
Let $\om_{\e}:\R^2 \to [0,\infty)$ be a bounded, nonnegative function. Set
\[\begin{aligned}
\Omega_{\e}&=\int \om_{\e}\,dx, \\
p_{\e}&=\frac{1}{\Omega_{\e}} \int x\om_{\e}(x)\,dx
\end{aligned}\]
and define
$$
I(\om_{\e})=\frac{1}{\Omega_{\e}} \int|x-p_{\e}|^2 \om_{\e}(x)\,dx.
$$
Then for any $x\neq p_{\e}$ and any $0 <r <|x-p_{\e}|$, we have
$$
\bigg|\frac{x-p_{\e}}{|x-p_{\e}|}\cdot \int_{B_r(p_{\e})} \frac{(x-y)^{\perp}}{|x-y|^2}\om_{\e}(y) \,dy \bigg| \le \frac{C}{(|x-p_{\e}|-r)^2}\frac{I(\om_{\e})}{r}.
$$
\end{lemma}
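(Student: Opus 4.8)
The plan is to reduce to the case $p_{\e}=0$ by translation and then to compute the radial component explicitly, the whole point being to exploit the vanishing of the first moment $\int y\,\om_{\e}(y)\,dy=\Omega_{\e}p_{\e}=0$. Setting $d:=|x|>r$ and using $z^{\perp}=(-z_2,z_1)$, the algebraic identity
\[
x\cdot(x-y)^{\perp}=x\cdot x^{\perp}-x\cdot y^{\perp}=x^{\perp}\cdot y,\qquad |x^{\perp}\cdot y|\le d\,|y|,
\]
turns the quantity to be estimated into
\[
\frac{x}{|x|}\cdot\int_{B_r(0)}\frac{(x-y)^{\perp}}{|x-y|^2}\om_{\e}(y)\,dy=\frac{1}{d}\int_{B_r(0)}\frac{x^{\perp}\cdot y}{|x-y|^2}\om_{\e}(y)\,dy=:R.
\]

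The conceptual heart of the matter is that a crude bound of this integral only produces a \emph{first} moment $\int_{B_r}|y|\,\om_{\e}\,dy$, which is not controlled by $I(\om_{\e})$; the quadratic (second-moment) gain must be forced out of the center-of-mass condition, and this is the step I expect to be the main obstacle. To do so I would subtract the value of the kernel at the center $y=0$, writing $\frac{1}{|x-y|^2}=\frac{1}{d^2}+\big(\frac{1}{|x-y|^2}-\frac{1}{d^2}\big)$ and splitting $R=R_1+R_2$ accordingly.

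For the constant part $R_1=\frac{1}{d^3}\,x^{\perp}\cdot\int_{B_r(0)}y\,\om_{\e}(y)\,dy$ I would use that the full first moment vanishes, so $\int_{B_r(0)}y\,\om_{\e}=-\int_{B_r(0)^c}y\,\om_{\e}$, and then a Chebyshev-type estimate ($|y|\le|y|^2/r$ on $B_r(0)^c$) gives
\[
\Big|\int_{B_r(0)}y\,\om_{\e}\Big|\le\frac{1}{r}\int_{\R^2}|y|^2\om_{\e}=\frac{\Omega_{\e}\,I(\om_{\e})}{r},
\qquad\text{hence}\qquad |R_1|\le\frac{\Omega_{\e}\,I(\om_{\e})}{d^2 r}.
\]
For the remainder $R_2$ I would use $\frac{1}{|x-y|^2}-\frac{1}{d^2}=\frac{2x\cdot y-|y|^2}{|x-y|^2 d^2}$ together with $|x-y|\ge d-r$ and the elementary bound $|2x\cdot y-|y|^2|\le 3d|y|$ valid for $y\in B_r(0)$ (since $|y|<r<d$); combined with $|x^{\perp}\cdot y|\le d|y|$ this bounds the integrand by $\frac{3|y|^2}{(d-r)^2 d}\om_{\e}(y)$, whence $|R_2|\le\frac{3\,\Omega_{\e}I(\om_{\e})}{(d-r)^2 d}$.

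Finally I would combine the two pieces and replace both denominators by $(d-r)^2 r$, using $d-r<d$ in $R_1$ and $r<d$ in $R_2$, to obtain
\[
|R|\le\frac{C\,\Omega_{\e}\,I(\om_{\e})}{(|x|-r)^2\,r}.
\]
Note that what the argument naturally controls is the unnormalized second moment $\Omega_{\e}I(\om_{\e})=\int|x-p_{\e}|^2\om_{\e}$; in the regime in which the lemma is applied (where $\Omega_{\e}\approx\g_m$ is bounded above and below) the factor $\Omega_{\e}$ is harmless and absorbed into the constant $C$, yielding the stated estimate.
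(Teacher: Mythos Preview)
The paper does not actually supply a proof of this lemma: it is stated with citations to \cite{Jer} and \cite{MaP} and the text immediately before says ``Lemma \ref{lemma control of the radial component of velocity}, \ref{lemma velocity out of singularity} and \ref{lemma radial component of velocity} are due to Marchioro and Pulvirenti \cite{MaP}. See also \cite{Jer} for details.'' So there is no in-paper argument to compare against.

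That said, your argument is correct and is essentially the standard Marchioro--Pulvirenti proof: translate so that $p_{\e}=0$, write the radial component as $\frac{1}{|x|}\int_{B_r}\frac{x^{\perp}\cdot y}{|x-y|^2}\om_{\e}(y)\,dy$, freeze the kernel at $y=0$ to extract the first moment (killed via the center-of-mass condition and a Chebyshev bound on $B_r^c$), and bound the remainder using $|x-y|\ge |x|-r$ together with the extra factor of $|y|$ coming from $|x|^2-|x-y|^2=2x\cdot y-|y|^2$. Your observation about the factor $\Omega_{\e}$ is also on point: the argument naturally produces the \emph{unnormalized} second moment $\int|y|^2\om_{\e}=\Omega_{\e}I(\om_{\e})$, and in every use of the lemma in this paper $\Omega_{\e}$ is uniformly bounded (cf.\ the hypothesis $\frac1C<|\Omega_{\e}|<C$ in the remark after Theorem~\ref{thm section main main}), so the discrepancy is absorbed into the constant $C$.
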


\begin{lemma}[\cite{Jer}, \cite{MaP}]\label{lemma velocity out of singularity}
Assume $F$ is a Lipschitz continuous function with Lipschitz constant $L$ and $\om_{\e}$ satisfies the same assumptions in Lemma \ref{lemma control of the radial component of velocity}. Then for any $x \in \R^2$, we have
$$
\Big| F(x)-\frac{1}{\Omega_{\e}}\int F(y)\om_{\e}\,dy \Big|\le CL\Big(|x-p_{\e}|+\sqrt{I(\om_{\e})}\Big).
$$
\end{lemma}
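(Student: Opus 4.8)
The plan is to exploit that $\frac{1}{\Omega_\e}\int F(y)\om_\e(y)\,dy$ is a genuine weighted average of $F$: since $\om_\e \ge 0$ and $\int \om_\e = \Omega_\e$, the measure $\om_\e(y)\,dy/\Omega_\e$ is a probability measure, so in particular $\frac{1}{\Omega_\e}\int \om_\e(y)\,dy = 1$. Using this normalization, I would insert $F(x)$ into the integral and write
$$F(x) - \frac{1}{\Omega_\e}\int F(y)\om_\e(y)\,dy = \frac{1}{\Omega_\e}\int \big(F(x) - F(y)\big)\om_\e(y)\,dy,$$
reducing the problem to controlling the pointwise discrepancy $F(x) - F(y)$ averaged against the weight.

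Next I would invoke the Lipschitz hypothesis $|F(x) - F(y)| \le L|x-y|$ together with the nonnegativity of $\om_\e$ to pass absolute values through the integral, obtaining
$$\Big| F(x) - \frac{1}{\Omega_\e}\int F(y)\om_\e\,dy\Big| \le \frac{L}{\Omega_\e}\int |x-y|\,\om_\e(y)\,dy.$$
The key geometric step is to route the estimate through the center of gravity $p_\e$ via the triangle inequality $|x-y| \le |x-p_\e| + |y-p_\e|$. The first piece contributes $L|x-p_\e|\cdot\frac{1}{\Omega_\e}\int \om_\e = L|x-p_\e|$, which is exactly the first term on the right-hand side of the claimed bound.

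For the remaining piece $\frac{L}{\Omega_\e}\int |y-p_\e|\,\om_\e(y)\,dy$, I would apply the Cauchy--Schwarz inequality (equivalently Jensen's inequality) with respect to the probability measure $\om_\e\,dy/\Omega_\e$ to trade the first moment of $|y-p_\e|$ for the second:
$$\frac{1}{\Omega_\e}\int |y-p_\e|\,\om_\e(y)\,dy \le \Big(\frac{1}{\Omega_\e}\int |y-p_\e|^2\,\om_\e(y)\,dy\Big)^{1/2} = \sqrt{I(\om_\e)},$$
recognizing the right-hand integral as precisely $I(\om_\e)$ from the definition in Lemma \ref{lemma control of the radial component of velocity}. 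Adding the two contributions yields the bound with $C=1$. There is no genuine analytic obstacle here; the only points requiring care are the correct normalization of the weight (so that the term constant in $y$ integrates to $1$ rather than to $\Omega_\e$) and matching the power in $I(\om_\e)$ to the moment produced by Cauchy--Schwarz.
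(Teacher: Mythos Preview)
Your proof is correct. The paper does not actually supply its own proof of this lemma: it attributes Lemmas~\ref{lemma control of the radial component of velocity}, \ref{lemma velocity out of singularity}, and \ref{lemma radial component of velocity} to Marchioro--Pulvirenti \cite{MaP} (see also \cite{Jer}) and moves on. Your argument---normalize, insert $F(x)$ into the integral, apply the Lipschitz bound, split $|x-y|$ through $p_\e$ via the triangle inequality, and control the first moment by the second via Cauchy--Schwarz/Jensen---is precisely the standard proof one finds in those references, and indeed the paper uses exactly this Jensen-type step elsewhere (e.g.\ in the estimate of $A_{2,\e}$ and $T_2$ in the proof of Theorem~\ref{thm section2 main}). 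Your identification of $I(\om_\e)=\frac{1}{\Omega_\e}\int|y-p_\e|^2\om_\e(y)\,dy$ matches the definition given in Lemma~\ref{lemma control of the radial component of velocity} (note this differs by a factor of $2$ from the time-dependent $I_\e(t)$ defined in Theorem~\ref{thm section2 main}, which is harmless up to the constant $C$).
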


\begin{lemma}[\cite{Jer}, \cite{MaP}]\label{lemma radial component of velocity}
Assume that $G \in W^{1,\infty}(\R^2,\R^2)$ satisfies
$$
G(x)=0 \text{ in $B_R(p_{\e})$}
$$
for some $R > 0$ and
$$G(x) \cdot (x-p_{\e})=0 \quad a.e. \,\,\,\,,$$
then
$$
\Big|\int_{\R^2\times \R^2}G(x) \cdot \frac{(x-y)^{\perp}}{|x-y|^2}\om_{\e}(x) \om_{\e}(y)\,dxdy\Big| \le C(\frac{\|G\|_{\infty}}{R^2}+\frac{\|\nabla G\|_{\infty}}{R^2})m(R)I(\om_{\e}),
$$
where
$$
m(R):=\int_{B^c_R(p_{\e})} |\om_{\e}(y)|\,dy.
$$
\end{lemma}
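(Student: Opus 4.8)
The plan is to use the one structural feature that makes the inequality possible: the hypothesis on $G$ forces the leading, ``point-vortex at $p_\e$'' part of the Biot--Savart kernel to drop out when it is paired with $G$, so that only the spreading of $\om_\e$ about its center $p_\e$ — measured by $I(\om_\e)$ — can survive, while the factor $m(R)$ comes from the confinement $G\equiv 0$ on $B_R(p_\e)$. Using the antisymmetry of the kernel $\frac{(x-y)^\perp}{|x-y|^2}$ I would first symmetrize, writing the integral as
\[
\tfrac12\iint \big(G(x)-G(y)\big)\cdot \frac{(x-y)^\perp}{|x-y|^2}\,\om_\e(x)\om_\e(y)\,dx\,dy,
\]
and then partition the symmetric domain into the part where both $x,y$ lie outside $B_{R/2}(p_\e)$ and the part where one of them lies inside $B_{R/2}(p_\e)$ (which, since $G$ vanishes on $B_R(p_\e)\supset B_{R/2}(p_\e)$, forces the other point into $B_R(p_\e)^c$). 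These two pieces are estimated by completely different mechanisms.

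On the region where both points lie outside $B_{R/2}(p_\e)$ the key algebraic gain is that the difference $G(x)-G(y)$ supplies a factor $|x-y|$ that cancels the singularity of the kernel: since $G\in W^{1,\infty}$ one has $\big|(G(x)-G(y))\cdot\frac{(x-y)^\perp}{|x-y|^2}\big|\le \|\nabla G\|_\infty$ pointwise, with no singularity remaining and, crucially, no appearance of $\|\om_\e\|_\infty$. Because $|y-p_\e|\ge R/2$ on this region I would then use the elementary bound $1\le \frac{4|y-p_\e|^2}{R^2}$ to convert the mass of $\om_\e$ in one variable into $\frac{4}{R^2}\int |y-p_\e|^2\om_\e\,dy=\frac{4}{R^2}\,\Omega_\e I(\om_\e)$, while the other variable, confined to $B_R(p_\e)^c$, contributes the factor $m(R)=\int_{B_R(p_\e)^c}\om_\e$. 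This produces the $\|\nabla G\|_\infty$ contribution to the bound.

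On the remaining region one point, say $y$, lies in $B_{R/2}(p_\e)$ and $x\in B_R(p_\e)^c$; here the contribution reduces to $\int_{B_R(p_\e)^c} G(x)\cdot w_{\mathrm{in}}(x)\,\om_\e(x)\,dx$, where $w_{\mathrm{in}}(x)=\int_{B_{R/2}(p_\e)}\frac{(x-y)^\perp}{|x-y|^2}\om_\e(y)\,dy$ is the velocity generated by the inner vorticity. This is where the orthogonality hypothesis is indispensable: it guarantees that $G(x)$ carries no component along $(x-p_\e)^\perp$, so that $G(x)\cdot w_{\mathrm{in}}(x)$ only sees the \emph{radial} component of $w_{\mathrm{in}}$, i.e. exactly the quantity controlled by Lemma \ref{lemma control of the radial component of velocity}. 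Taking $r=R/2$ and using $|x-p_\e|\ge R$ gives $\big|\tfrac{x-p_\e}{|x-p_\e|}\cdot w_{\mathrm{in}}(x)\big|\lesssim \frac{1}{|x-p_\e|^2}\,\frac{I(\om_\e)}{R}$; multiplying by $\|G\|_\infty$ and by $\om_\e(x)$, using the kernel decay $|x-p_\e|^{-2}\le R^{-2}$, and integrating over $x\in B_R(p_\e)^c$ (which yields $m(R)$) produces the $\|G\|_\infty$ contribution. Without the orthogonality the tangential velocity from the inner mass would be of order $|x-p_\e|^{-1}$ and no such estimate could hold; this is precisely why the hypothesis, rather than mere smallness of $G$, is what drives the proof.

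The main obstacle is the bookkeeping at the interface between the two regimes. One must fix the cut-off (here $R/2$) so that \emph{every} piece is simultaneously free of $\|\om_\e\|_\infty$, carries the advertised powers of $R$, and factors cleanly as $m(R)$ times $I(\om_\e)$; in particular, when the inner estimate is pushed to next order, the first moment $\int(y-p_\e)\om_\e\,dy$ must be controlled, which is possible because $p_\e$ is the center of mass (so this moment over all of $\R^2$ vanishes and its restriction to $B_{R/2}(p_\e)^c$ is again bounded by $I(\om_\e)$ via the same weight trick). Tracking these cancellations and the normalization $\int|y-p_\e|^2\om_\e=\Omega_\e I(\om_\e)$, rather than any single inequality, is the delicate part; once assembled, the two contributions combine to give the stated bound $C\big(\frac{\|G\|_\infty}{R^2}+\frac{\|\nabla G\|_\infty}{R^2}\big)\,m(R)\,I(\om_\e)$.
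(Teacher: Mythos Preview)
The paper does not prove this lemma; it is quoted from \cite{Jer} and \cite{MaP} without argument. Your outline --- symmetrize via the antisymmetry of $K(x-y)$, split the domain at radius $R/2$, use the Lipschitz bound $|G(x)-G(y)|\le\|\nabla G\|_\infty|x-y|$ together with the weight trick $1\le 4|y-p_\e|^2/R^2$ on the outer--outer piece, and invoke Lemma~\ref{lemma control of the radial component of velocity} on the inner piece --- is exactly the standard Marchioro--Pulvirenti argument, so in that sense there is nothing to compare.

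There is, however, a genuine inconsistency in your reading of the hypothesis. The condition $G(x)\cdot(x-p_\e)=0$ says literally that $G$ is \emph{tangential} (orthogonal to the radial direction), hence parallel to $(x-p_\e)^\perp$; this is the opposite of your claim that ``$G(x)$ carries no component along $(x-p_\e)^\perp$''. With $G$ tangential, $G(x)\cdot w_{\mathrm{in}}(x)$ picks out the tangential component of the inner velocity, which is of order $|x-p_\e|^{-1}$ and is precisely what Lemma~\ref{lemma control of the radial component of velocity} does \emph{not} control; your inner estimate would then collapse and the lemma would be false as stated.

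In fact the printed hypothesis is almost certainly a typo. In the only application (the bound on $T_2$ in Lemma~\ref{lemma section2 main}) one takes $G(x)=\nabla\chi_R(x-p_\e)$ with $\chi_R$ radial, which is a \emph{radial} vector field, satisfying $G(x)\cdot(x-p_\e)^\perp=0$ rather than $G(x)\cdot(x-p_\e)=0$. With that corrected hypothesis your argument is correct: $G(x)\cdot w_{\mathrm{in}}(x)$ is then $|G(x)|$ times the radial component of $w_{\mathrm{in}}$, and Lemma~\ref{lemma control of the radial component of velocity} applies directly. You should state this correction explicitly rather than silently reinterpret the hypothesis. (Two harmless side remarks: your inner estimate actually yields $\|G\|_\infty/R^3$ rather than the stated $\|G\|_\infty/R^2$, and the constant $C$ must absorb a factor of $\Omega_\e$ coming from the weight trick; neither affects the application.)
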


Next in order to control the velocity away from $p_{\e}(t)$, we proceed as in \cite{MaP} and \cite{Ser1}. Define a smooth cut-off function
\begin{equation*}
\chi_{R}(x)=
\begin{cases}
1 \,\,\,\,\, |x|\le R \\
0 \,\,\,\,\, |x|> 2R
\end{cases}\end{equation*}
which is nonnegative, radially decreasing and satisfies
$$
|\nabla \chi_{R}(x)|\le \frac{C}{R} \quad \text{and} \quad |D^2 \chi_R(x)|\le \frac{C}{R^2}
$$
for all $x\in \R^2$, then we obtain
\begin{lemma}\label{lemma section2 main}
Assume $F_{1,\e}$ is Lipschitz continuous with Lipschitz constant $L$. Define
\[\begin{aligned}
\mu_t(R)&=\frac{1}{\Omega_{\e}}\int\bigg(1-\chi_{R}\big(x-p_{\e}(t)\big)\bigg)\om_{\e}(x,t)\,dx, \\
m_t(r)&=\int_{B_r(p_{\e}(t))}|\om_{\e}(x,t)| \,dx.
\end{aligned}\]
Then
\begin{equation}\label{eq section2 **}
\frac{d}{dt} \mu_t(R)\le C\left(L+\sqrt{I_{\e}(t)}+\frac{I_{\e}(t)}{R^4}+\frac{\|F_{2,\e}\|_{\infty}}{R}\right)m_t(R).
\end{equation}
\end{lemma}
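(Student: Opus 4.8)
The plan is to differentiate $\mu_t(R)$ directly, using the weak formulation \eqref{eq integration by parts1} with the time-dependent test function $\phi(x,t)=1-\chi_{R}(x-p_{\e}(t))$. Since $\partial_t\phi=\nabla\chi_{R}(x-p_{\e}(t))\cdot\dot p_{\e}(t)$ and $\nabla\phi=-\nabla\chi_{R}(x-p_{\e}(t))$, and since (exactly as in the proof of Theorem \ref{thm section2 main}, using the antisymmetry \eqref{eq section2 antisymmetry} which kills the self-velocity) the center of gravity obeys $\dot p_{\e}(t)=\frac1{\Omega_{\e}}\int(u_{\e}+F_{1,\e}+F_{2,\e})\om_{\e}\,dx=\frac1{\Omega_{\e}}\int(F_{1,\e}+F_{2,\e})\om_{\e}\,dx$, I would obtain
\[
\Omega_{\e}\frac{d}{dt}\mu_t(R)=\int \nabla\chi_{R}(x-p_{\e}(t))\cdot\Big(\dot p_{\e}(t)-u_{\e}(x,t)-F_{1,\e}(x,t)-F_{2,\e}(x,t)\Big)\om_{\e}(x,t)\,dx.
\]
The structural point is that the weight $\nabla\chi_{R}(x-p_{\e})$ is supported in the annulus $\{R\le|x-p_{\e}(t)|\le 2R\}$ and satisfies $|\nabla\chi_{R}|\le C/R$, $|D^2\chi_{R}|\le C/R^2$; hence every contribution carries a factor equal to the mass of $\om_{\e}(\cdot,t)$ in that annulus, which I bound by $m_t(R)$ (the exterior mass, as in Lemma \ref{lemma radial component of velocity}).

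I then split the integrand into the pieces coming from $F_{2,\e}$, from $F_{1,\e}$, and from the self-induced velocity $u_{\e}$. The $F_{2,\e}$ piece is immediate: using $|F_{2,\e}|\le\|F_{2,\e}\|_{\infty}$ together with $|\nabla\chi_{R}|\le C/R$ produces the term $\frac{\|F_{2,\e}\|_{\infty}}{R}m_t(R)$. For the $F_{1,\e}$ piece I rewrite the factor as $\frac1{\Omega_{\e}}\int F_{1,\e}\om_{\e}\,dy-F_{1,\e}(x)$ and estimate it by Lemma \ref{lemma velocity out of singularity} (equivalently by the Lipschitz property of $F_{1,\e}$ and Jensen's inequality), getting $CL(|x-p_{\e}|+\sqrt{I_{\e}(t)})$; since $|x-p_{\e}|\le 2R$ on the annulus, this yields the $C\big(L+\sqrt{I_{\e}(t)}\big)m_t(R)$ contributions.

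The \emph{main obstacle} is the self-interaction term $-\int\nabla\chi_{R}(x-p_{\e})\cdot u_{\e}(x)\om_{\e}(x)\,dx$. Because $\nabla\chi_{R}$ is radial, only the radial component of $u_{\e}$ on the annulus enters, and I would control it by splitting the Biot--Savart integral at radius $R/2$. The contribution of the vorticity inside $B_{R/2}(p_{\e})$ is precisely what Lemma \ref{lemma control of the radial component of velocity} estimates: for $|x-p_{\e}|\ge R$ its radial component is $\le CI_{\e}(t)/R^3$, and after multiplying by $|\nabla\chi_{R}|\le C/R$ and integrating over the annulus this gives the singular term $\frac{I_{\e}(t)}{R^4}m_t(R)$. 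For the contribution of the vorticity outside $B_{R/2}(p_{\e})$ both integration variables lie in $B_{R/2}^c(p_{\e})$, so the kernel is antisymmetric there; I symmetrize and use $|\nabla\chi_{R}(x-p_{\e})-\nabla\chi_{R}(y-p_{\e})|\le\frac{C}{R^2}|x-y|$ (the $C^2$ bound on $\chi_{R}$), which renders the symmetrized integrand $\le C/R^2$ and makes this piece of higher order in the escaped mass, hence absorbable. This is exactly the mechanism behind Lemma \ref{lemma radial component of velocity}.

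Collecting the three estimates and factoring out $m_t(R)$ then yields \eqref{eq section2 **}. I expect the self-interaction term to be the only delicate point, specifically the clean extraction of the $I_{\e}(t)/R^4$ factor from the interior vorticity via the radial-velocity lemma while keeping the exterior--exterior interaction genuinely lower order; the $F_{1,\e}$ and $F_{2,\e}$ pieces are routine once the motion of $p_{\e}(t)$ has been correctly incorporated into $\partial_t\phi$.
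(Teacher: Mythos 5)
Your proposal is structurally the paper's own proof: the same test function $1-\chi_R\big(x-p_\e(t)\big)$ in the weak formulation \eqref{eq integration by parts1}, the same use of the antisymmetry identities to express $\frac{d}{dt}p_\e(t)$ (killing the self-velocity), the same splitting into an $F_{1,\e}$ term estimated via Lemma \ref{lemma velocity out of singularity}, an $F_{2,\e}$ term estimated by $|\nabla\chi_R|\le C/R$, and the self-interaction term. The one difference is that the paper disposes of the self-interaction term by directly invoking the cited Lemma \ref{lemma radial component of velocity}, obtaining $|T_2|\le \frac{C}{R^4}I_\e(t)\,m_t(R)$, whereas you unfold that lemma's proof — and that is exactly where your write-up has a genuine gap.

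The gap is in your exterior--exterior estimate. Symmetrizing and using $|\nabla\chi_R(x-p_\e)-\nabla\chi_R(y-p_\e)|\le \frac{C}{R^2}|x-y|$ together with $|K(x-y)|\le \frac{C}{|x-y|}$ gives a bound of the form $\frac{C}{R^2}\,m_t(R/2)^2$, and ``quadratic in the escaped mass, hence absorbable'' is a non sequitur: the prefactor $1/R^2$ is not among the terms on the right-hand side of \eqref{eq section2 **} (and in the intended application, Theorem \ref{thm section main main}, $R$ can be as small as $\sqrt{A_\e}$, so $1/R^2$ is unbounded), and the mass factor produced is $m_t(R/2)$, not $m_t(R)$. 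Two further observations — which are precisely the content of Lemma \ref{lemma radial component of velocity} — are required to close this step: \emph{(a)} the symmetrized integrand vanishes unless at least one of $x,y$ lies in the annulus $\{R\le |\cdot-p_\e(t)|\le 2R\}$ where $\nabla\chi_R\neq 0$, so by symmetry one mass factor can be restricted to $B_R(p_\e(t))^c$ and thus equals $m_t(R)$; and \emph{(b)} the other factor is converted into moment of inertia by Chebyshev's inequality,
\[
m_t(R/2)=\int_{|y-p_\e(t)|\ge R/2}|\om_\e(y,t)|\,dy\le \frac{8|\Omega_\e|}{R^2}\,I_\e(t).
\]
Combining \emph{(a)} and \emph{(b)} turns your bound into $\frac{C}{R^2}\cdot m_t(R)\cdot \frac{I_\e(t)}{R^2}=\frac{C}{R^4}I_\e(t)\,m_t(R)$, which is the term appearing in \eqref{eq section2 **}. (Of course, simply citing Lemma \ref{lemma radial component of velocity} as a black box, as the paper does, also completes the argument; but as an inline derivation, your step would fail without these two refinements.)
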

\begin{proof}
We will abbreviate $\chi_{R}$ as $\chi$ for simplicity. Then due to \eqref{eq integration by parts1} we obtain
\begin{equation*}\begin{aligned}
\frac{d}{dt}\mu_t(R)&=\frac{1}{\Omega_{\e}}\int \nabla \chi\big(x-p_{\e}(t)\big)\cdot \left(\frac{d}{dt}p_{\e}(t)-u_{\e}(x,t)-F_{1,\e}(x,t)-F_{2,\e}(x,t)\right)\,dx \\
&:=T_1+T_2+T_3,
\end{aligned}\end{equation*}
where
$$
T_1:=\frac{1}{\Omega_{\e}}\int \nabla \chi\big(x-p_{\e}(t)\big)\cdot \left(\int F_{1,\e}(y,t)\frac{\om_{\e}(y,t)}{\Omega_{\e}}\,dy-F_{1,\e}(x,t)\right)\,dx,
$$
$$
T_2:=-\frac{1}{\Omega_{\e}}\int \nabla \chi\big(x-p_{\e}(t)\big)\cdot \frac{(x-y)^{\perp}}{2\pi|x-y|^2}\om_{\e}(x,t)\om_{\e}(y,t)\,dx\,dy\,\,\,\,\,\,\,\,\,\,\,\,\,\,\,\,\,\,\,\,
$$
and
$$
T_3:=-\frac{1}{\Omega_{\e}}\int \nabla \chi\big(x-p_{\e}(t)\big)\cdot F_{2,\e}(x,t)\om_{\e}(x,t)\,dx.\,\,\,\,\,\,\,\,\,\,\,\,\,\,\,\,\,\,\,\,\,\,\,\,\,\,\,\,\,\,\,\,\,\,\,\,\,\,\,\,\,\,\,\,\,\,\,\,\,
$$
Note that $\nabla\chi\big(x-p_{\e}(t)\big)$ is supported in the annulus
$$
\Lambda_t :=\{x:R\le |x-p_{\e}(t)| \le 2R \}
$$
and recall that
$$|\nabla \chi| \le \frac{C}{R},$$
so
$$
|T_3| \le \frac{C}{R}\|F_{2,\e}\|_{\infty}\int_{B_R(p_{\e})^c} |\om_{\e}|\,dx \le \frac{C}{R}\|F_{2,\e}\|_{\infty} m_t(R)
$$

and we obtain
\[\begin{aligned}
|T_1| \le \frac{C}{R}\int_{\Lambda_t} \Bigg|F_{1,\e}(x,t)-\int F_{1,\e}(y,t) \frac{\om_{\e}(y,t)}{\Omega_{\e}}\,dy\Bigg|\,dx.
\end{aligned}\]
Thus, by Lemma \ref{lemma velocity out of singularity}
\[
|T_1|\le \frac{C}{R} \Big(RL+\sqrt{I_{\e}(t)}\Big)m_t(R).
\]
Finally, due to Lemma \ref{lemma radial component of velocity} we get
$$
|T_2|\le \frac{C}{R^4}m_t(R)I_{\e}(t).
$$
Combining the above estimates we get the desired conclusion.
\end{proof}
With the help of above lemmas, now we prove our main Theorem of this section.
\begin{proof} Proof of Theorem \ref{thm section main main}.

\emph{Step $1$:} ( Control of the vorticity away from $p_{\e}(t)$.) Fix $T >0$ and we see clearly from the definition of $\mu_t$ and $m_t$ that
$$\mu_t(R)\le m_t(R) \le \mu_t(R/2)$$
for any $t\in [0,T)$ and $R >0$.
Therefore, by \eqref{eq section2 control of second momentum}, \eqref{eq section2 **} and the assumptions on initial vorticity, we get
$$
\mu_t(R) \le \mu_0(R)+C_R\int_0^t \mu_{t_1}(R/2)\,dt_1,
$$
where
\[\begin{aligned}
C_R=&C\left(L+\sqrt{I_{\e}(t)}+\frac{I_{\e}(t)}{R^4}+\frac{\|F_{2,\e}\|_{\infty}}{R}\right)
\le& C(1+A_{\e}+\frac{A_{\e}^2}{R^4}+\frac{A_{\e}}{R}).
\end{aligned}\]
Note that $\mu_0(R)=0$ when $R\ge \sqrt{A_{\e}}$, so for all $R\ge \sqrt{A_{\e}}$ we have
$$\mu_t(R)\le \mu_0(R)+C\int_0^t \mu_{t_1}(R/2)\,dt_1=C\int_0^t \mu_{t_1}(R/2)\,dt_1.$$
Iterating $k$ times, we obtain
\[
\begin{aligned}
\mu_t(2^k \sqrt{A_{\e}}) \le& C\int_0^t \mu_{t_1}(2^{k-1}R)\,dt_1  \\
\le& C^k \int_0^t \cdot\cdot\cdot \int_0^{t_{k-1}} \mu_{t_k}(\sqrt{A_{\e}}) \,dt_{k}\cdot\cdot\cdot dt_1.
\end{aligned}\]
Since $\om_{\e}(\cdot,t)$ is a rearrangement of $\om_{0,\e}(\cdot)$, we get
$$\mu_t(R) \le \|\om_{\e}(\cdot,t)\|_1=\|\om_{0,\e}\|_1=|\Omega_{\e}|$$
for any $R>0$. Then as a consequence of Stirling's formula,
\[
\begin{aligned}
\mu_t(2^k \sqrt{A_{\e}}) \le& C^k \int_0^t \cdot\cdot\cdot \int_0^{t_{k-1}} \mu_{t_k}(\sqrt{A_{\e}}) \,dt_{k}\cdot\cdot\cdot dt_1 \\
\le& |\Omega_{\e}|\frac{C^k t^k}{k!}
\le |\Omega_{\e}|\frac{C^kT^ke^k}{k^{k+1/2}}
\le \frac{C^k}{k^{k+1/2}}.
\end{aligned}\]
Now we fix $\beta \in (a,\frac12)$ and choose $k=k(\e)$ such that
$$
\frac{A_{\e}^{\beta}}{2} \le 2^k \sqrt{A_{\e}} < A_{\e}^{\beta}.
$$
More precisely, we take
$$
k=\left\lfloor \frac{(\frac12-\beta)|\log A_{\e}|}{\log 2} \right\rfloor -1,
$$
where $\lfloor x \rfloor$ is the smallest integer greater than $x$. With such a choice of $k$, we then have
\[
\mu_t(A_{\e}^{\beta})\le \mu_t(2^k \sqrt{A_{\e}}) \le \frac{C^{|\log A_{\e}|}}{(\tilde{C}|\log A_{\e|})^{\tilde{C}|\log A_{\e}|}},
\]
where
\[
\tilde{C}=\frac{\frac12-\beta}{2\log 2} >0
\]
since $\beta<\frac12$. A direct calculation shows that when $\e_0=\e_0(C,\tilde{C},\gamma_1)$ is chosen small enough, then for any $\e \le \e_0$ we have
\[
\frac{C^{|\log\e|}}{(\tilde{C}|\log\e|)^{\tilde{C}|\log\e|}} \le \e^{\gamma_1}.
\]
 Thus we obtain
\begin{equation}\label{eq section2 control of outer mass}
\mu_t(A_{\e}^{\beta})\le A_{\e}^{\gamma_1}
\end{equation}
for all $\e \le \e_0(T,\beta,a,\gamma_1)$ small enough since $A_{\e} \to 0$ as $\e \to 0$.

 \emph{ Step $2$:} ( Control of the particle trajectory map $X(\alpha,t)$.) Now we fix $\alpha \in \R^2$ such that
 $$
 \alpha \in \text{supp}(\om_{0,\e}) \subset B_{\e}(p(0)).
 $$
As a result,
 $$|p(0)-p_{\e}(0)|=\Big|\frac{1}{\Omega_{\e}} \int \big(x-p(0)\big)\om_{0,\e}(x)\,dx\Big|  \le \e$$
 and hence
 $$
  \alpha \in \text{supp}(\om_{0,\e}) \subset B_{2 \e}(p_{\e}(0)).
 $$
Let $X(t):=X(\alpha,t)$ be the solution of
\[\begin{cases}
\frac{dX}{dt}=\left(u_{\e}+F_{1,\e}+F_{2,\e}\right)(X(t),t) \\
X(\alpha,0)=\alpha,
\end{cases}\]
where
\[
u_{\e}(x,t):=\frac{1}{2\pi} \int \frac{(x-y)^{\perp}}{|x-y|^2} \om_{\e}(y,t)\,dy.
\]
Define
\[
R(t):= \max \Big\{ |X(\alpha,t)-p_{\e}(t)| , 8A_{\e}^{\beta} \Big\}.
\]
We claim
\[
\frac{dR}{dt} \le CR(t)
\]
for almost every $t\in [0,T]$.
Since $R(t) \in W^{1,\infty}([0,T] \to \R)$, standard measure theory tells us
\[
\frac{d}{dt}R=0 \quad \text{a.e. on each level set of $R(t)$}.
\]
In particular $\frac{dR}{dt}=0$ on the level set $R(t)=8A_{\e}^{\beta}$. In order to get an upper bound of $R(t)$, it suffice to consider the time $t\in [0,T]$ when $R(t) > 8 A_{\e}^{\beta} \gg A_{\e}$. In such cases we have
\[
R(t)=|X(\alpha,t)-p_{\e}(t)|.
\]
Direct computation gives
\[\begin{aligned}
\frac{dR}{dt}&=\frac{X(t)-p_{\e}(t)}{|X(t)-p_{\e}(t)|}\cdot \left(u_{\e}\left(X(t),t\right)+F_{1,\e}(X(t),t)+F_{2,\e}(X(t),t)-\int \big(F_{1,\e}(y,t)+F_{2,\e}(y,t)\big)\om_{\e}(y,t) \,dy \right) \\
&:= T_1+T_2+T_3+T_4,
\end{aligned}\]
where
\[\begin{aligned}
T_1&=\frac{X(t)-p_{\e}(t)}{|X(t)-p_{\e}(t)|}\cdot u_{\e}(X(t),t),\\
T_2&=\frac{X(t)-p_{\e}(t)}{|X(t)-p_{\e}(t)|}\cdot \left(F_{1,\e}\big(X(t),t\big)-\int F_{1,\e}(y,t)\om_{\e}(y,t) \,dy \right), \\
T_3&=\frac{X(t)-p_{\e}(t)}{|X(t)-p_{\e}(t)|}\cdot F_{2,\e}\big(X(t),t\big), \\
T_4&=\frac{X(t)-p_{\e}(t)}{|X(t)-p_{\e}(t)|}\cdot \int F_{2,\e}(y,t)\om_{\e}(y,t) \,dy .
\end{aligned}\]
We first estimate
\[\begin{aligned}
T_1=&\frac{1}{2\pi}\int\frac{X(t)-p_{\e}(t)}{|X(t)-p_{\e}(t)|}\cdot \frac{(x-y)^{\perp}}{|x-y|^2}\om_{\e}(y,t) \,dy \\
=&\frac{1}{2\pi}\int_{B_{\frac{R(t)}{2}}(p_{\e}(t))} \frac{X(t)-p_{\e}(t)}{|X(t)-p_{\e}(t)|}\cdot \frac{(x-y)^{\perp}}{|x-y|^2}\om_{\e}(y,t) \,dy \\ &+\frac{1}{2\pi}\int_{B^c_{\frac{R(t)}{2}}(p_{\e}(t))} \frac{X(t)-p_{\e}(t)}{|X(t)-p_{\e}(t)|}\cdot \frac{(x-y)^{\perp}}{|x-y|^2}\om_{\e}(y,t) \,dy \\
:&=T_{11}+T_{12}.
\end{aligned}\]
Recall that $R(t)=|X(t)-p_{\e}(t)| \ge 8A_{\e}^{\beta}$ and $I_{\e}(t) \le CA_{\e}^2$, so we obtain from Lemma \ref{lemma control of the radial component of velocity} that
\[
|T_{11}| \le \frac{C}{R(t)^3} I_{\e}(t) \le C A_{\e}^{2-3\beta}\le A_{\e}^{\beta}
\]
since $\beta < \frac12$ and $A_{\e} \to 0$ as $\e \to 0$. Next we consider $T_{12}$, by Lemma \ref{lemma control of the velocity} we have
  \[\begin{aligned}
  |T_{12}|\le& \frac{1}{2\pi} \int \left|\frac{1}{|x-y|}\om_{\e}(y,t)\mathbf{1}_{{B^c_{\frac{R(t)}{2}}(p_{\e}(t))}}\right|\,dy \\
  &\le C\|\om_{\e}(y,t)\mathbf{1}_{{B^c_{\frac{R(t)}{2}}(p_{\e}(t))}}\|_1^{\frac{p_1-2}{2p_1-2}} \|\om_{\e}(y,t)\mathbf{1}_{{B^c_{\frac{R(t)}{2}}(p_{\e}(t))}}\|_{p_1}^{\frac{p_1}{2p_1-2}},
  \end{aligned}\]
  where $\mathbf{1}_{\Omega}$ is the characteristic function on $\Omega$ and $p_1>2$ is the number we defined in Theorem \ref{thm section main main}. Therefore, by definition of $p_1$, we have
\[
\|\om_{\e}(y,t)\mathbf{1}_{{B^c_{\frac{R(t)}{2}}(p_{\e}(t))}}\|_{p_1} \le \| \om_{\e} \|_{p_1} = \|\om_{0,\e}\|_{p_1} \le A_{\e}^{-\gamma}.
\]
 Then by \eqref{eq section2 control of outer mass}, it follows that
\[
\|\om_{\e}(y,t)\mathbf{1}_{{B^c_{\frac{R(t)}{2}}(p_{\e}(t))}}\|_1=m_{t}(\frac{R(t)}{2})\le m_t(4A_{\e}^{\beta})\le \mu_t(A_{\e}^{\beta}) \le A_{\e}^{\gamma_1}.
\]
As a result, if we choose $\gamma_1=\gamma_1(p_1, \beta, \gamma)$ large enough, then
\[\begin{aligned}
|T_{12}| \le& A_{\e}^{\frac{\gamma_1(p_1-2)-\gamma p_1}{2p_1-2}} \le A_{\e}^{\beta} \le R(t).
\end{aligned}\]
 For $T_2$, since $R(t) > 8 A_{\e}^{\beta} \gg A_{\e}$, Lemma \ref{lemma velocity out of singularity} yields
\[
|T_2| \le  C(|X(t)-p_{\e}(t)|+A_{\e}) \le CR(t).
\]

For $T_3,T_4$, the fact $\|F_{2,\e}\|_{\infty} \le A_{\e}$ implies
\[
|T_3|+|T_4| \le CA_{\e} \le R(t),
\]

which shows $\frac{dR}{dt}\le CR(t)$. Together with the fact that $R(0)=8A_{\e}^{\beta}$,  Gr\"{o}nwall's inequality gives
\[
R(t)\le CR(0) \le CA_{\e}^{\beta}.
\]
Now recall that $a<\beta < \frac12$, so for all $\e \le \e_0$, choosing $\e_0=\e_0(a,\beta)$ small enough, then we have
\[
R(t) \le A_{\e}^{a}.
\]

\emph{Step $3$:} ( Final arguments.) Note that $\om_{\e}(X(\alpha,t),t)=\om_{0,\e}(\alpha)$ by vorticity transport formula, so
\[
\text{supp}(\om_{\e}(\cdot,t))=\{X(\alpha,t): \alpha \in \text{supp}(\om_{0,\e})\}.
\]
Then by Step $2$, we see that
$$|X(\alpha,t)-p_{\e}(t)|\le A_{\e}^a$$
for all $\alpha \in \text{supp}(\om_{0,\e})$. As a result,
\[
\text{supp}(\om_{\e}(\cdot,t))\subset B_{A_{\e}^a}(p_{\e}(t)),
\]
which completes the proof of the Theorem.
\end{proof}
\begin{remark}\label{remark section2 not lipschitz}
Indeed, we do not need $F_{1,\e}$ to be Lipschitz continuous in $\R^2$. Theorem \ref{thm section main main} remains valid if we only assume $F_{1,\e}(\cdot,t)$ to be Lipschitz continuous in some convex neighborhood that contains $P(t)$, $p_{\e}(t)$ and $\text{supp}(\om_{\e}(\cdot,t))$.
\end{remark}

\section{Proof of the main Theorem.}
\subsection{Proof of Theorem \ref{thm main}.}
Together with the prior estimates we obtained in Section 2, now we prove Theorem \ref{thm main} by a bootstrap argument.
\begin{proof}{Proof of Theorem \ref{thm main}}  .
From Lemma \ref{lemma euler decomposition}, the solution $\om_{\e}(x,t)$ of equation \eqref{eq perturb} admits a decomposition $$\om_{\e}=\sum_{m=1}^M \om_{m,\e}+\om_{p,\e},$$ where $\om_{m,\e}(\cdot,t)$ is a rearrangement of $\om_{0m,\e}(\cdot)$ and  satisfies
\begin{equation*}\begin{cases}\begin{aligned}
\partial_t \om_{m,\e}+(u_{m,\e}+F_{1,m,\e}+F_{2,m,\e}) \cdot \nabla \om_{m,\e}=0 \\
\om_{m,\e}(\cdot,0)=\om_{0m,\e}
\end{aligned}\end{cases}\end{equation*}

in weak sense with $F_{1,m,\e}=\sum_{l\neq m}u_{l,\e}$ and $F_{2,m,\e}=u_{p,\e}$. Now in order to apply Theorem \ref{thm section main main} to $\om_{m,\e}$, we need to check that \begin{enumerate}
\item $F_{1,m,\e}$ is Lipschitz continuous and the Lipschitz constant is independent of $\e$ and the time $t$.
\item $\|F_{2,m,\e}\|_{\infty}\to 0 \quad \text{as $\e \to 0$}.$
\item The vorticity transport formula holds:
$$\om_{m,\e}(X(\alpha,t),t)=\om_{0m,\e}(\alpha),$$
where $X(\alpha,t)$ satisfies
\[\begin{cases}
\frac{dX(\alpha,t)}{dt}=(u_{m,\e}+F_{1,m,\e}+F_{2,m,\e})(X(\alpha,t),t) \\
X(\alpha,0)=\alpha.
\end{cases}\]
\item For any time $t$, the map $\alpha \to X(\alpha,t)$ is measure preserving.
\item $\om_{0m,\e} \in L^1 \cap L^{\infty}$ and does not change sign.
\item $\frac{1}{C} < \int |\om_{0m,\e}|\,dx <C$ for some $C>0$.
\item There exist some $\gamma>0,\, p_1>2$ such that $\|\om_{0m,\e}\|_{p_1}\le A_{\e}^{-\gamma}$.
\item $A_{\e} \to 0$ as $\e \to 0$ and $A_{\e}\ge\max \{ \e,\|F_{2,m,\e}\|_{\infty} \}$.

\end{enumerate}
Condition (v),(vi),(vii) follows directly from our assumptions of $\om_{0m,\e}$ in Theorem \ref{thm main}. Condition (ii),(iii),(iv),(vii),(viii) followed by Lemma \ref{lemma euler decomposition} and our assumptions on $\om_{0p,\e}$ in Theorem \ref{thm main}. Thus it remains to verify condition (i).
Define

\[\begin{aligned}
\delta:=&\min \{|p_m(t)-p_l(t)| : 0\le t \le T, m\neq l \} >0, \\
p_{m,\e}(t):=&\frac{1}{\gamma_{m,\e}}\int x\om_{m,\e}(x,t)\,dx, \\
I_{m,\e}(t):=&\frac{1}{2\gamma_{m,\e}} \int |x-p_{m,\e}(t)|^2\om_{m,\e}(x,t)\,dx
\end{aligned}\]
and let $P_{m,\e}(t)$ be the solution of the following equation
\[\begin{cases}
\frac{d}{dt}P_{m,\e}=F_{1m,\e}(P_{m,\e}(t),t) \\
P_{m,\e}(0)=p_m(0).
\end{cases}\]
Set \[\begin{aligned}T_{\e}:=\sup \Big\{&t \in [0,T]:\text{supp}(\om_{m,\e}(\cdot,s))\subset B_{\frac{\delta}{16}}(p_{m}(s)), \\ &|P_{m,\e}(s)-p_m(s)|+|p_{m,\e}(s)-p_m(s)| \le \frac{\delta}{16} \quad \text{for $m=1, ..., M$ and $0 \le s \le t$} \Big\}. \end{aligned}\]
Then by Lemma \ref{lemma lipschitz property} we see that
\[
|\nabla F_{1,\e}(x,t)|\le \frac{C}{\delta^2} \sum_{l\neq m} \|\om_{0m,\e}\|_{1} \le C.
\]
 for all $t \le T_{\e}$ and $x \in B_{\frac{\delta}{2}}(p_m(t))$. Next from the definition of $T_{\e}$, we find that $B_{\frac{\delta}{2}}(p_m(t))$ contains $p_{m,\e}(t),P_{m,\e}(t)$ and $\text{supp}(\om_{m,\e}(\cdot,t))$ for all $t \le T_{\e}$. Thus by Remark \ref{remark section2 not lipschitz}, Theorem \ref{thm section main main} can be applied to $\om_{m,\e}$ on the time interval $[0,T_{\e}]$.

First, it follows from Theorem \ref{thm section2 main} that
\[
I_{m,\e}(t) \le CA_{\e}^2
\]
and
\[
|p_{m,\e}(t)-P_{m,\e}(t)|\le CA_{\e}.
\]
Then Theorem \ref{thm section main main} guarantees that for some  $a <b<\frac12$,
\[\begin{aligned}
\text{supp}(\om_{m,\e}(\cdot,t))\subset& B_{ A_{\e}^b}(p_{m,\e}(t))\subset B_{A_{\e}^b+CA_{\e}}(P_{m,\e}(t))\subset B_{2A_{\e}^b}(P_{m,\e}(t))
\end{aligned}\]
for all $0< \e \le \e_0$ once we choose $\e_0=\e_0(b)$ small enough. Define
\[
G(t)=\sum_{m=1}^M |P_{m,\e}(t)-p_m(t)|,
\]
we claim that
\begin{equation}\label{eq section3 *}\frac{d}{dt}G \le C(G(t)+A_{\e}^b).\end{equation}
Once the claim is proved, then Gr\"{o}nwall's inequality gives
\[
|P_{m,\e}(t)-p_m(t)|\le G(t) \le CA_{\e}^b,
\]
which implies
\[
|p_{m,\e}(t)-p_m(t)|\le |P_{m,\e}(t)-p_{m,\e}(t)|+|P_{m,\e}(t)-p_m(t)|\le CA_{\e}+CA_{\e}^b.
\]
Thus, we obtain
\begin{equation}\label{eq section3 main}
\text{supp}(\om_{m,\e}(\cdot,t)) \subset B_{CA_{\e}^{b}}(p_m(t)) \subset B_{A_{\e}^a}(p_m(t))
\end{equation}
for $0< \e \le \e_0$ if we choose $\e_0=\e_0(a,b)$ small enough. As a result, for all $t \in [0,T_{\e}]$, $0<\e \le \e_0$ and $m=1, ..., M$, we have
\[
\text{supp}(\om_{m,\e}(\cdot,t))\subset B_{\frac{\delta}{32}}(p_{m}(t))
\]
and
\[
|P_{m,\e}(t)-p_m(t)|+|p_{m,\e}(t)-p_m(t)| \le \frac{\delta}{32},
\]
which contradicts the definition of $T_{\e}$ if $T_{\e}<T$. So $T_{\e}=T$ and \eqref{eq section3 main} holds for $t \in [0,T]$. Therefore, it suffice to prove \eqref{eq section3 *}. In fact, let $K(z)=\frac{z^{\perp}}{2\pi |z|^2}$ be the Biot-Savart kernel, since $p_m(t)$ satisfies the Helmholtz equation \eqref{eq helmholtz}, it holds that
\[\begin{aligned}
\frac{d}{dt}G\le& \sum_{m=1}^M \Big|\frac{d}{dt}(P_{m,\e}-p_{m})\Big| \\
=& \sum_{m=1}^M \Big|F_{1m,\e}\big(P_{m,\e}(t),t\big)-\frac{d}{dt}p_m(t)\Big| \\
=& \sum_{m=1}^M \Big|F_{1m,\e}\big(P_{m,\e}(t),t\big)-\sum_{l \neq m}\gamma_l K\big(p_m(t)-p_l(t)\big)\Big| \\
\le& \sum_{m=1}^M \Big| F_{1m,\e}\big(P_{m,\e}(t),t\big)-F_{1m,\e}\big(p_m(t),t\big) \Big| \\
&+ \sum_{m=1}^M\Big|F_{1m,\e}\big(p_{m}(t),t\big)-\sum_{l \neq m}\gamma_{l,\e} K\big(p_m(t)-p_l(t)\big)\Big| \\
&+ \sum_{m=1}^M\Big|\sum_{l \neq m}\gamma_{l,\e} K\big(p_m(t)-p_l(t)\big)-\sum_{l \neq m}\gamma_{l} K\big(p_m(t)-p_l(t)\big)\Big| \\
:=& T_1+T_2+T_3.
\end{aligned}\]
 Recall that
$$
P_{m,\e}(t)\subset B_{CA_{\e}}(p_{m,\e}(t)) \subset B_{\frac{\delta}{3}}(p_m(t))
$$
and $|\nabla F_{1m,\e}| \le C$ in such region, so it follows that $|T_1| \le C G(t)$.
Then by definition of $F_{1m,\e}$, we get
$$F_{1m,\e}=\sum_{l\neq m} u_{l,\e}=\sum_{l \neq m}\int \gamma_{l,\e} K(x-y)\frac{\om_{l,\e}(y,t)}{\gamma_{l,\e}}\,dy.$$
Therefore,
\[\begin{aligned}
|T_2|\le& C\sum_{m=1}^M\sum_{l\neq m} \int \bigg|K\bigg(p_m(t)-y\bigg)-K\bigg(p_m(t)-p_l(t)\bigg)\bigg|\,\frac{\om_{l,\e}(y,t)}{\gamma_{l,\e}}\,dy \\
=& C\sum_{m=1}^M\sum_{l\neq m} \int_{B_{\frac{\delta}{4}}(p_l(t))} \bigg|K\bigg(p_m(t)-y\bigg)-K\bigg(p_m(t)-p_l(t)\bigg)\bigg|\,\frac{\om_{l,\e}(y,t)}{\gamma_{l,\e}}\,dy.
\end{aligned}\]
Since $|p_m(t)-p_l(t)| \ge \delta$ when $m \neq l$ and $|\nabla K(z)| \le \frac{C}{\delta^2} $ when $|z| \ge \frac{\delta}{16}$, by mean value Theorem we obtain
\[\begin{aligned}
|T_2| \le& C \sum _{l=1}^M \int \Big|\big(y-p_l(t)\big)\om_{l,\e}(y,t)\Big|\,dy \\
\le& \sum_{l=1}^M \int \Big|\big(y-P_{l,\e}(t)\big)\om_{l,\e}(y,t)\Big|\,dy+\sum_{l=1}^M \int \Big|\big(P_{l,\e}(t)-p_l(t)\big)\om_{l,\e}(y,t)\Big|\,dy \\
:=& T_{21}+T_{22}.
\end{aligned}\]
For $T_{22}$, it is obvious that $T_{22} \le CG(t)$.
For $T_{21}$, the fact
$$\text{supp}(\om_{l,\e}(\cdot,t))\subset B_{2A_{\e}^b}(P_{l,\e}(t))$$
implies $$T_{21}\le CA_{\e}^{b}$$ and hence $$ |T_2| \le C(G(t)+A_{\e}^b).$$
Finally, as a consequence of the assumptions $|\gamma_{m,\e}-\gamma_m| \le CA_{\e}^{\frac12}$ and recall that $|p_m(t)-p_l(t)|\ge \delta$ when $m\neq l$, we get
\[
T_3 \le CA_{\e}^{b}
\]
since $b< \frac12$. Combining the above estimates we obtain
\[
\frac{dG}{dt} \le C(G(t)+A_{\e}^b)
\]
and the proof of the Theorem is complete.
\end{proof}
\subsection{Proof of Theorem \ref{thm concentration of mass}.}
Now we consider the initial vorticity which has the form
\begin{equation*}\om_{0,\e}(x)=\sum_{m=1}^M \frac{\gamma_m}{\e^2} \eta(\frac{x-p_m^0}{\e}),\end{equation*}
where $\eta$ is a nonnegative function satisfies
\begin{equation}\label{eq ??}\begin{aligned}
\int \eta(x)\,dx=1 \quad \text{and} \quad
\eta(x) \le \frac{C}{1+|y|^{2+\sigma}} \quad \text{for $\sigma>0$}.
\end{aligned}\end{equation}
We will show that as a consequence of Theorem \ref{thm main}, the solution $\om_{\e}(\cdot,t)$ remains concentrated near $M$ points $p_1(t), ..., p_M(t)$.

\begin{proposition}\label{prop main1}(Decomposition of the initial vorticity.)
Let $p_0$ be a point in $\R^2$ and $\om_{\e}(x)=\frac{1}{\e^2}\eta(\frac{x-p_0}{\e})$. Then $\om_{\e}$ admits a decomposition
\begin{equation*}
\om_{\e}(x)=\om_{1,\e}(x)+\om_{p,\e}(x),
\end{equation*}
where
\begin{equation}\label{eq section3 1}
\text{supp}(\om_{1,\e}) \subset B_{\e^{\frac{\sigma}{\sigma+2}}}(p_0) ,
\end{equation}
\begin{equation}\label{eq section3 2}
|1-\int \om_{1,\e}(x)\,dx| \le C\e^{\frac{\sigma^2}{\sigma+2}}
\end{equation}
and for any $q>2$, we have
\begin{equation}\label{eq section3 3}
\|\om_{p,\e}\|_q^{\frac{q}{2q-2}}\| \om_{p,\e}\|_1^{\frac{q-2}{2q-2}} \lesssim_q \e^{\frac{\sigma^2}{\sigma+2}}.
\end{equation}
\end{proposition}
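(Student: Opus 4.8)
The plan is to obtain the decomposition from a single spatial truncation of $\om_\e$, with the truncation radius chosen to meet the support requirement \eqref{eq section3 1}. Abbreviating $B=B_R(p_0)$, I would set
\[
\om_{1,\e}:=\om_\e\,\mathbf{1}_{B},\qquad \om_{p,\e}:=\om_\e\,\mathbf{1}_{B^c},
\]
for a radius $R=\e^{\theta}$ to be fixed. Both pieces are then nonnegative (inheriting $\eta\ge 0$, which is exactly the sign condition needed to later invoke Theorem~\ref{thm main}), $\om_{1,\e}+\om_{p,\e}=\om_\e$, and \eqref{eq section3 1} holds as soon as $\theta\ge\frac{\sigma}{\sigma+2}$.

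The common engine for the two quantitative estimates is the rescaling $y=(x-p_0)/\e$, under which $\om_\e(x)\,dx=\eta(y)\,dy$ and any region $\{|x-p_0|>R\}$ becomes $\{|y|>\rho\}$ with $\rho:=R/\e=\e^{\theta-1}\to\infty$. For the mass defect \eqref{eq section3 2} I would write $1-\int\om_{1,\e}=\int_{|y|>\rho}\eta(y)\,dy$ and use the decay hypothesis $\eta(y)\le C|y|^{-(2+\sigma)}$ together with the two-dimensional tail bound $\int_{|y|>\rho}|y|^{-(2+\sigma)}\,dy\lesssim\rho^{-\sigma}$; substituting $\rho=\e^{\theta-1}$ turns this into a power of $\e$. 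The perturbation estimate \eqref{eq section3 3} needs the two norms of the tail: its $L^1$ norm is the mass defect just computed, while the same rescaling gives $\|\om_{p,\e}\|_q^q=\e^{2-2q}\int_{|y|>\rho}\eta(y)^q\,dy\lesssim\e^{2-2q}\rho^{\,2-q(2+\sigma)}$. Feeding both powers into the product $\|\om_{p,\e}\|_q^{q/(2q-2)}\|\om_{p,\e}\|_1^{(q-2)/(2q-2)}$—which is precisely the quantity controlled in Lemma~\ref{lemma control of the velocity}, so that \eqref{eq section3 3} is the assertion that the velocity generated by $\om_{p,\e}$ is small—the dependence on $q$ should cancel, leaving a single power of $\e$ governed only by $\sigma$ and $\theta$.

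The place where I expect the real difficulty is the selection of the exponent $\theta$, because the three requirements pull it in opposite directions. The support bound \eqref{eq section3 1} forces $\theta$ large, i.e.\ $R$ small; but a smaller $R$ leaves more mass in the tail and, worse, keeps the density of $\om_{p,\e}$ large just outside $B$, which inflates both $\|\om_{p,\e}\|_1$ and $\|\om_{p,\e}\|_q$. Thus \eqref{eq section3 2} and \eqref{eq section3 3} push $\theta$ down while \eqref{eq section3 1} pushes it up, and the target exponent $\frac{\sigma^2}{\sigma+2}$ should appear only at the balancing choice of $\theta$. Verifying that one admissible radius simultaneously yields the support property and the claimed power in the interpolation quantity \eqref{eq section3 3}—the most sensitive of the three, since it weights the large near-boundary $L^q$ mass—is the crux of the argument.

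Once the decomposition is in hand, the output is tailored for the next step: nonnegativity of the two pieces, the smallness of $\int\om_{p,\e}$, and the bound \eqref{eq section3 3} on the interpolation quantity are exactly the hypotheses of Theorem~\ref{thm main}, now with the effective regularization scale played by $R=\e^{\theta}$. This is the mechanism by which the self-similar data is reduced to the already-established stability result, and hence yields the concentration statement of Theorem~\ref{thm concentration of mass}.
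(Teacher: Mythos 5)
Your proposal is essentially the paper's own proof: the paper performs exactly this sharp cutoff, writing $\om_{1,\e}=\frac{1}{\e^2}\eta(\frac{x}{\e})\chi_{|x|\le \e^{1-\beta}}$ with $\beta=\frac{2}{\sigma+2}$, i.e.\ your $\theta=1-\beta=\frac{\sigma}{\sigma+2}$ (the smallest $\theta$ admitted by the support constraint), and it derives the same tail bounds $\rho^{-\sigma}$ and $\e^{2-2q}\rho^{2-q(2+\sigma)}$ and the same $q$-independent product $\e^{\beta\sigma+\beta-1}$. The ``crux'' you deferred is then pure arithmetic: with this choice the interpolation-product exponent is $\beta\sigma+\beta-1=\frac{\sigma}{\sigma+2}$ and the mass-defect exponent is $\beta\sigma=\frac{2\sigma}{\sigma+2}$, which dominate the claimed $\frac{\sigma^2}{\sigma+2}$ precisely when $\sigma\le 1$ and $\sigma\le 2$ respectively --- a wrinkle shared by the paper itself, whose proof asserts $A_{\e}=\e^{\sigma^2/(\sigma+2)}$ even though its own definition \eqref{eq section3 A} gives $A_{\e}=\e^{\sigma/(\sigma+2)}$ (the value it later uses in the proof of Theorem \ref{thm concentration of mass}).
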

\begin{proof}
We may assume with lose of generality that $p_0=(0,0)$, otherwise we replace $\eta(x)$ by $\tilde{\eta}(x):=\eta(x-p_0)$. For any $0<\beta <1$, we can decomposite $\om_\e$ as
\[\begin{aligned}
\frac{1}{\e^2}\eta(\frac{x}{\e})=\frac{1}{\e^2}\eta(\frac{x}{\e})\chi_{|x|\le \e^{1-\beta}}+\frac{1}{\e^2}\eta(\frac{x}{\e})\chi_{|x| > \e^{1-\beta}} :=\om_{1,\e}+\om_{p,\e}.
\end{aligned}\]
Define
\[
\gamma_{\e}=\int \om_{1,\e}(x)\,dx.
\]
Then from \eqref{eq ??} we have
\[\begin{aligned}
\gamma_{\e}=& \frac{1}{\e^2} \int_{|x|\le \e^{1-\beta}} \eta(\frac{x}{\e}) \,dx =1-\int_{|y|>\e^{-\beta}}\eta(y)\,dy.
\end{aligned}\]
Since
$|\eta(y)| \lesssim |y|^{-2-\sigma}$
for $|y|$ large, we obtain
\[
\int_{|y|>\e^{-\beta}} \eta(y)\,dy \le C \e^{\beta \sigma}
\]
and
\begin{equation}\label{eq section3 2.1}
|\gamma_{\e}-1|\le C\e^{\beta \sigma}.
\end{equation}
For $\om_{p,\e}$, we compute that
\[\begin{aligned}
\|\om_{p,\e}\|_q=&\frac{1}{\e^2} \int_{|x|>\e^{1-\beta}} \eta^q(\frac{x}{\e})\,dx^{\frac1q} \\
=& \e^{-2+\frac2q}\int_{|y|>\e^{-\beta}}\eta^q(y)\,dy^{\frac1q} \\
\lesssim& \e^{\beta \sigma} \e^{(1-\beta)(-2+\frac2q)}
\end{aligned}\]
since $|\eta(y)| \lesssim |y|^{-2-\sigma}$ and $q>2$.

Thus,
\begin{equation*}\begin{aligned}
\|\om_{p,\e}\|_q^{\frac{q}{2q-2}}\| \om_{p,\e}\|_1^{\frac{q-2}{2q-2}} \lesssim& \e^{\frac{\beta \sigma(q-2)}{2q-2}}\e^{\frac{q\beta\sigma+(1-\beta)(-2q+2)}{2q-2}}
=\e^{\beta\sigma+\beta-1}.
\end{aligned}\end{equation*}
To ensure that this quantity is small, we need an assumption that
\[
\beta\sigma+\beta-1>0,
\]
which means
\[
1>\beta >\frac{1}{\sigma+1} \quad \text{or equivalently} \quad \frac{\sigma}{1+\sigma}>1-\beta>0.
\]
Then we define
\begin{equation}\label{eq section3 A}
A_{\e}:= \max\{ \e^{1-\beta},\e^{\beta\sigma+\beta-1}\}.
\end{equation}
To minimize the quantity $A_{\e}$, we choose $\beta=\beta(\sigma)=\frac{2}{\sigma+2}$ which satisfies $1>\beta >\frac{1}{\sigma+1}$. With such choice of $\beta$, we get $A_{\e}=\e^{\frac{\sigma^2}{\sigma+2}}$ and equation \eqref{eq section3 3} follows directly. Meanwhile,
\[
\text{supp}(\om_{1,\e}) \subset B_{\e^{1-\beta}}= B_{\e^{\frac{\sigma}{\sigma+2}}} ,
\]
which is exactly \eqref{eq section3 1}. Inequality \eqref{eq section3 2} follows directly from \eqref{eq section3 2.1} and our choice of $\beta(\sigma)$.
\end{proof}
\begin{proof}{Proof of Theorem \ref{thm concentration of mass} (i).} According to Proposition \ref{prop main1}, $\om_{0,\e}$ admits a decomposition:
\begin{equation*}
\om_{0,\e}(x)=\sum_{m=1}^M \om_{0m,\e}(x)+\om_{0p,\e}(x),
\end{equation*}
where $\om_{0m,\e}$ does not change sign,
\[
\text{supp}(\om_{0m,\e})\subset B_{\e^{\frac{\sigma}{\sigma+2}}}(p_m^0)
\]
and
\[
|\gamma_m-\int \om_{0m,\e}(x)\,dx| \le C\e^{\frac{\sigma^2}{\sigma+2}}.
\]
Moreover, $\om_{0p,\e}$ satisfies
\[
\|\om_{0p,\e}\|_q^{\frac{q}{2q-2}}\| \om_{0p,\e}\|_1^{\frac{q-2}{2q-2}} \le C \e^{\frac{\sigma^2}{\sigma+2}}
\]
for $q=4$ and
\[
\|\om_{0p,\e}\|_1 \le C\e^{\frac{\sigma^2}{\sigma+2}}.
\]
Set $\tilde{\e}=\e^{\frac{\sigma}{\sigma+2}}$ and define
\[
W_{0,\tilde{\e}}(x):=\sum_{m=1}^M W_{0m,\tilde{\e}}(x)+W_{0p,\tilde{\e}}(x),
\]
where
\[
W_{0m,\tilde{\e}}(x):=\om_{0m,\e}(x) \quad \text{and} \quad W_{0p,\tilde{\e}}(x):=\om_{0p,\e}(x).
\]
Let $A_{\tilde{\e}},\gamma_{m,\tilde{\e}}$ be the quantity that we defined in section 1 with $\e$ replaced by $\tilde{\e}$, then we get $A_{\tilde{\e}}\le CA_{\e}$, where $A_{\e}=\e^{\frac{\sigma}{\sigma+2}}$ is defined in \eqref{eq section3 A}. Since $A_{\tilde{\e}} \to 0$ as $\e \to 0$, we have
\[
|\gamma_m-\gamma_{m,\tilde{\e}}|\le CA_{\tilde{\e}} \le CA_{\tilde{\e}}^{\frac12}.
\]
 A directly calculation shows that $\|W_{0m,\tilde{\e}}\|_{\infty}\lesssim \e^{-r_1}$ and $A_{\tilde{\e}}\approx \e^{r_2}$ for some $r_1,r_2>0$. So there exists $\gamma_1>0$ such that $\|W_{0m,\tilde{\e}}\|_{\infty} \le A_{\tilde{\e}}^{-\gamma_1}$. Thus, $W_{0,\tilde{\e}}$ satisfies all the assumptions we made in Theorem \ref{thm main}. Therefore, for any $a_1<\frac{1}{2}$, $\om_{m,\e}(\cdot,t)=W_{m,\tilde{\e}}(\cdot,t)$ is supported in $B_{(A_{\tilde{\e}})^{a_1}}(p_m(t))=B_{\e^{\frac{a_1\sigma}{\sigma+2}}}(p_m(t))$ when $\tilde{\e} \le \tilde{\e}_0(\sigma,a_1,T)$ and $0\le t \le T$. Moreover,
\[
\|\om_{p,\e}(\cdot,t)\|_1=\|W_{p,\tilde{\e}}(\cdot,t)\|_1=\|W_{0p,\tilde{\e}}\|_1\le C\e^{\frac{\sigma^2}{\sigma+2}}.
\]
Now for any $a< \frac{\sigma}{2(\sigma+2)}$, we take $a_1=\frac{a(\sigma+2)}{\sigma}<\frac12$ and $\e_0=\tilde{\e}_0^{\frac{\sigma+2}{\sigma}}$. Then for all $\e \le \e_0$, the solution $\om_{\e}(\cdot,t)$ admits a decomposition
\[
\om_{\e}(\cdot,t)=\sum_{m=1}^M \om_{m,\e}(\cdot,t)+\om_{p,\e}(\cdot,t),
\]
where $\sum_{m=1}^M \om_{m,\e}(\cdot,t)$ is supported in
\[
B(t):= \bigcup_{m=1}^M B_{\e^a}(p_m(t))
\]
and
\[
\|\om_{p,\e}(\cdot,t)\|_1 \le C\e^{\frac{\sigma^2}{\sigma+2}}.
\]
As a result,
\[
\int_{B(t)^c}|\om_{\e}(x,t)|\,dx \le C\e^{\frac{\sigma^2}{\sigma+2}}
\]
and the proof completes.

\end{proof}

\begin{proof}{Proof of Theorem \ref{thm concentration of mass} (ii).}
By the similar argument  in Proposition \ref{prop main1}, we have a decomposition
\[\begin{aligned}
\frac{1}{\e^2}\eta(\frac{x}{\e})&=\frac{1}{\e^2}\eta(\frac{x}{\e})\chi_{|x|\le \e^{1-\beta}}+\frac{1}{\e^2}\eta(\frac{x}{\e})\chi_{|x| > \e^{1-\beta}}\\
&:=\om_{1,\e}+\om_{p,\e}
\end{aligned}\]
with
\[
\|\om_{p,\e}\|_{1} \lesssim \e^{\beta\sigma}.
\]
For any $\frac{\sigma}{\sigma+1}<\gamma_1<\sigma$, we take $\beta=\beta(\sigma):=\frac{\gamma_1}{\sigma}$, then we have
\[
\|\om_{p,\e}\|_{1} \lesssim \e^{\gamma_1}.
\]
We also note that $\om_{1,\e}$ is supported in a ball of radius $A_{\e}$, where
\begin{equation*}
A_{\e}:= \max\{ \e^{1-\beta},\e^{\beta\sigma+\beta-1}\}=\e^{2a_0}
\end{equation*}
and \[
a_0=a_0(\gamma_1,\sigma):= \frac{1}{2}\min \left\{1-\frac{\gamma_1}{\sigma},\gamma_1+\frac{\gamma_1}{\sigma}-1 \right\}
\] is defined in Theorem \ref{thm concentration of mass}. Finally, by a similar argument as in the proof of Theorem \ref{thm concentration of mass} (i), we get the desired conclusion.
\end{proof}

\section{Long time dynamic for point vortices in Euler flows}
We will assume throughout this section that $|p_m(t)-p_l(t)|\ge \delta>0$ for all $t \in [0,\infty)$ when $m\neq l$.
\subsection{A single vortex with a background flow}
We first prove a similar result to Theorem \ref{thm section main main}.
\begin{theorem} \label{thm section4 main}
Assume $\om_{0,\e} \in L^{1} \cap L^{\infty}$, $\om_{0,\e}$ does not change sign, $$\text{supp}(\om_{0,\e}) \subset B_{\e}(p(0)) $$
for some point $p(0)\in \R^2$ and $\om_\e$ satisfies \eqref{eq single vortex}.
Let the total vorticity $\Omega_{\e}$ be
$$ \Omega_{\e}=\int \om_{0, \e} \,dx.$$
We also assume that $A_{\e} \to 0$ as $\e \to 0$ and there exist some $\gamma >0$ , $p_1>2$ such that $$\| \om_{0,\e}\|_{p_1} \le A_{\e}^{-\gamma}$$
with
$$\max \left\{      \e,\|F_{2,\e}\|_{\infty}         \right\} \le A_{\e}.$$

Then for any any $a < \frac12$, there exist $\e_0 >0 $ and $c_0 >0$ such that for all $0 < \e \le \e_0$ and $t \le c_0 |\log A_{\e}|$, we have
$$\text{supp}(\om_{\e}(\cdot,t))\subset B_{A_{\e}^{a}}(p_{\e}(t)).$$

\end{theorem}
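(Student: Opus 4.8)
The plan is to repeat the three–step scheme from the proof of Theorem~\ref{thm section main main}, but to keep the explicit dependence on the final time $T$ in every estimate and only at the very end set $T=c_0|\log A_\e|$ with $c_0$ chosen small. The one genuinely new feature is that the second moment $I_\e(t)$ is no longer bounded by $CA_\e^2$ uniformly in time: from Theorem~\ref{thm section2 main}, together with $I_\e(0)\le CA_\e^2$ and $\|F_{2,\e}\|_\infty\le A_\e$, one only gets
\[
I_\e(t)\le Ce^{2Lt}A_\e^2\le CA_\e^{\,2-2Lc_0}\qquad(0\le t\le c_0|\log A_\e|),
\]
so the core may slowly spread. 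I would write $\theta:=2Lc_0$ and treat $\theta$ as a small parameter to be absorbed at the end.

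For Step~1 (control of the mass outside $B_{A_\e^\beta}(p_\e(t))$, with $a<\beta<\tfrac12$), I would run the iteration of Lemma~\ref{lemma section2 main} exactly as before, but start it at the bottom scale $R_0:=A_\e^{1/2-\theta/4}$ rather than $\sqrt{A_\e}$. This is the decisive adjustment: the dangerous coefficient in \eqref{eq section2 **} is $I_\e(t)/R^4$, and the choice of $R_0$ gives $R_0^4\gtrsim I_\e(t)$, so every coefficient $C_R$ in the iterated integral stays bounded by a constant $C=C(L)$ independent of $\e$ and of the scale. Since $\mu_0(R)=0$ for all $R\ge\e$ and $R_0\gg\e$, iterating $k$ times up to $2^kR_0\approx A_\e^\beta$ and invoking Stirling's formula yields
\[
\mu_t(A_\e^\beta)\le|\Omega_\e|\,\frac{(Ct)^k}{k!}\le|\Omega_\e|\Big(\frac{Cte}{k}\Big)^k,\qquad k\approx\frac{(\tfrac12-\tfrac\theta4-\beta)|\log A_\e|}{\log 2}.
\]
As $t\le c_0|\log A_\e|$ and $k$ is also of order $|\log A_\e|$, the ratio $Cte/k$ is bounded by a constant multiple of $c_0$; choosing $c_0$ small makes it $<1$, so $\mu_t(A_\e^\beta)\le A_\e^{\gamma_1}$ for some $\gamma_1=\gamma_1(c_0)>0$.

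Step~2 (control of the particle trajectory) then proceeds essentially verbatim: for $R(t):=\max\{|X(\alpha,t)-p_\e(t)|,\,8A_\e^\beta\}$ I would split the velocity into the inner part $T_{11}$, the outer part $T_{12}$, the background term $T_2$ and the perturbation terms $T_3,T_4$, and bound them by Lemmas~\ref{lemma control of the radial component of velocity}, \ref{lemma control of the velocity}, \ref{lemma velocity out of singularity} together with $\mu_t(A_\e^\beta)\le A_\e^{\gamma_1}$. The only change is bookkeeping: $T_{11}\le CI_\e(t)/R(t)^3\le CA_\e^{\,2-\theta-3\beta}$, which is $\le A_\e^\beta$ precisely because $\beta\le\tfrac12-\tfrac\theta4$, so one still obtains $\tfrac{dR}{dt}\le CR(t)$. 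Gr\"onwall then gives $R(t)\le e^{Ct}R(0)\le 8A_\e^{\,\beta-Cc_0}$, and choosing $c_0\le(\beta-a)/C$ forces $R(t)\le A_\e^{a}$ for all $t\le c_0|\log A_\e|$. The final Step~3 is unchanged, turning the trajectory bound into the support statement via $\om_\e(X(\alpha,t),t)=\om_{0,\e}(\alpha)$.

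The hard part is the simultaneous calibration of the single constant $c_0$. It must be small enough: (i) to keep the inertia exponent $2-2Lc_0$ positive and, more importantly, to keep $R_0^4\ge I_\e(t)$ so that the iterated coefficients do not blow up like $e^{c|\log A_\e|^2}$; (ii) to beat the factorial in Step~1, i.e.\ to make $Cte/k<1$ uniformly in $\e$; and (iii) to prevent the Gr\"onwall factor $e^{Ct}=A_\e^{-Cc_0}$ in Step~2 from overrunning the margin $\beta-a$. Each requirement is an upper bound on $c_0$ in terms of $L,\beta,a$, so the theorem holds with $c_0$ the minimum of these finitely many thresholds.
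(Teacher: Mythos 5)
Your proposal is correct and is essentially the paper's own proof: the paper likewise feeds the $e^{2Lt}\le A_{\e}^{-2Lc_0}$ growth from Theorem \ref{thm section2 main} into a degraded moment bound (its parameter $\sigma$ playing the role of your $\theta$), restarts the iteration of Lemma \ref{lemma section2 main} at the enlarged bottom scale $A_{\e}^{(1-\sigma)/2}$, beats the factorial to get $\mu_t\le A_{\e}^{\gamma_1}$ with $\gamma_1\to\infty$ as $c_0\to 0$, and absorbs the Gr\"onwall factor $A_{\e}^{-Cc_0}$ into the margin between $\beta(1-\sigma)$ and $a$. The one bookkeeping point you leave implicit is that the outer-velocity term $T_{12}$ forces $\gamma_1$ to exceed a threshold depending on $p_1,\gamma,\beta$, so the list of thresholds for $c_0$ also involves $\gamma$ and $p_1$, not only $L,\beta,a$; this fits your calibration scheme since $\gamma_1(c_0)\to\infty$ as $c_0\to 0$.
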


\begin{proof}
With the same definition as in Section $2$, we see from Theorem \ref{thm section2 main} that
\[
I_{\e}(t) \le 2e^{2Lt}\left( I_{\e}(0)+\frac{\|F_{2,\e}\|_{\infty}^2}{2}\int_0^t e^{-Ls}\,ds^2\right)
\]
and
\[\begin{aligned}
|p_{\e}(t)-p(t)|\le e^{Lt} \Bigg( |p_{\e}(0)-p(0)|&+2L(\sqrt{I_{\e}(0)}+\frac{\|F_{2,\e}\|_{\infty}}{\sqrt{2}L})&\int_0^t \int_0^r e^{-Ls }\,ds\,dr \\&+\|F_{2,\e}\|_{\infty}\int_0^t e^{-Ls}\,ds \Bigg).
\end{aligned}\]
So for arbitrary $\sigma>0$, there exists $c_0=c_0(\sigma)$ small enough, such that for all $t \le c_0|\log A_{\e}|$ we have
\[
I_{\e}(t)\le A_{\e}^{2-2\sigma} \]
and
\[
|p_{\e}(t)-p(t)| \le A_{\e}^{1-\sigma}.
\]
Thus, Lemma \ref{lemma section2 main} yields
\[
\mu_t(R) \le \mu_0(R)+C\int_0^{t}\mu_{t_1}(R/2)\,dt_1=C\int_0^{t}\mu_{t_1}(R/2)\,dt_1
\]
for any $R\ge A_{\e}^{\frac{1-\sigma}{2}}$. Iterating $k$ times and applying Stirling's formula, we get
\[
\begin{aligned}
\mu_t(2^k A_{\e}^{\frac{1-\sigma}{2}}) \le& C^k \int_0^t \cdot\cdot\cdot \int_0^{t_{k-1}} \mu_{t_k}(A_{\e}^{\frac{1-\sigma}{2}}) \,dt_{k}\cdot\cdot\cdot dt_1 \\
\le& |\Omega_{\e}|\frac{C^k t^k}{k!}
\le \frac{|c_0C \log A_{\e}|^k}{k^{k+1/2}}
\le \frac{|c_0 \log A_{\e}|^k}{k^{k+1/2}}.
\end{aligned}\]

Now fix $\beta \in (a,\frac12)$, assume $\sigma$ is small enough such that $a<\beta(1-\sigma)$. Then we can choose $k=k(\e)$ such that
\[
\frac{A_{\e}^{\beta(1-\sigma)}}{2} \le 2^k A_{\e}^{\frac{1-\sigma}{2}} \le  A_{\e}^{\beta(1-\sigma)}.
\]
More precisely, we take
$$
k=\left\lfloor \frac{(1-\sigma)(\frac12-\beta)|\log A_{\e}|}{\log 2} \right\rfloor -1.
$$
We claim that for any $\gamma_1>0$, there exists $c_0\le c_0(\gamma_1)$ small enough such that for all $t \le c_0|\log A_{\e}|$,
\[
\mu_t(A_{\e}^{\beta(1-\sigma)})\le \mu_t(2^k A_{\e}^{\frac{1-\sigma}{2}}) \le A_{\e}^{\gamma_1}.
\]
 Indeed, since
\[\begin{aligned}
\mu_t(2^k A_{\e}^{\frac{1-\sigma}{2}}) \le \frac{|c_0 \log A_{\e}|^k}{k^{k+1/2}}
\le \left|\frac{c_0 \log A_{\e}}{C\log A_{\e}}\right|^k
\le c_0^{C|\log A_{\e}|},
\end{aligned}\]
it suffice to show that
\[
C| \log A_{\e}|\log c_0 \le \gamma_1 \log A_{\e},
\]
which is equivalent to
\[
C \log c_0 \le -\gamma_1
\]
since $\log A_{\e}<0.$
So the claim is true for $c_0\le e^{-\frac{\gamma_1}{C}} $. Finally, for arbitrary $\alpha \in \text{supp}(\om_{0,\e})$, define
\[
R(t):= \max \{ |X(\alpha,t)-p_{\e}(t)| , 8A_{\e}^{\beta(1-\sigma)} \}.
\]
As in section $3$, we obtain
\[
\frac{d}{dt}R \le CR(t),
\]
which implies
\[
R(t) \le 8A_{\e}^{\beta(1-\sigma)} A_{\e}^{-Cc_0}
\]
for all $t \le c_0 |\log A_{\e}|$. Recall that $a < \beta(1-\sigma)$, so we can choose $c_0=c_0(\beta,\sigma,a)$ small enough such that $R(t) \le A_{\e}^a$, which implies
$$\text{supp}(\om_{\e}(\cdot,t))\subset B_{A_{\e}^{a}}(p_{\e}(t))$$
and the proof is complete.

\end{proof}

\subsection{Proof of Theorem \ref{thm long time}}
\begin{proof}

For any $a<\frac12$ we choose $b,\sigma$ such that $a<b<\frac12$ and $a<(1-\sigma)b$. Then as in section $3$, let
 \[\begin{aligned}T_{\e}:=\sup \Big\{&t \in [0,+\infty):\text{supp}(\om_{m,\e}(\cdot,s))\subset B_{\frac{\delta}{16}}(p_{m}(s)), \\&|P_{m,\e}(s)-p_m(s)|+|p_{m,\e}(s)-p_m(s)| \le \frac{\delta}{16} \quad \text{for $m=1, ..., M$ and $0 \le s \le t$} \Big\}. \end{aligned}\]
We assume that $T_{\e} <c_0(b) |\log A_{\e}|$, where $c_0(b)$ is the constant we defined in Theorem \ref{thm section4 main}. Then for any $t \le T_{\e}$, we have
$$|p_{m,\e}(t)-P_{m,\e}(t)|\le CA_{\e}^{1-\sigma}.$$
Thus,
\[
\text{supp}(\om_{m,\e}(\cdot,t))\subset B_{A_{\e}^b+CA_{\e}^{1-\sigma}}(P_{m,\e}(t))\subset B_{2A_{\e}^b}(P_{m,\e}(t)).
\]
Let
\[
G(t)=\sum_{m=1}^M |P_{m,\e}(t)-p_m(t)|,
\]
then compute as in section $3$ gives
$$\frac{d}{dt}G \le C(G(t)+A_{\e}^b),$$
which implies
\[
G(t) \le e^{Ct}(G(0)+tA_{\e}^b)
\]
by Gr\"{o}nwall's inequality. So for all $t<c_0 |\log A_{\e}|$ ,we can take $c_0=c_0(b,C)$ small enough and $\e \le \e_0(c_0)$ small such that
\[
G(t) \le A_{\e}^{b(1-\sigma)}.
\]
As a result,
$$
|P_{m,\e}(t)-p_m(t)|\le A_{\e}^{b(1-\sigma)}.
$$
Together with the assumption $a<b(1-\sigma)$ and the fact
\[
\text{supp}(\om_{m,\e}(\cdot,t))\subset B_{2A_{\e}^b}(P_{m,\e}(t)),
\]
we finally get
\[
\text{supp}(\om_{m,\e}(\cdot,t))\subset B_{A_{\e}^a}(p_{m}(t))
\]
when $\e \le \e_0$ small enough. Then arguing as in section 3, the proof of Theorem \ref{thm long time} is complete.

\end{proof}
\section{Generalization to bounded domains.}  Let $\Omega \subset \R^2$ be an open, bounded and simply connected region with smooth boundary. We will consider in this section the Euler equation in $\Omega$.
\begin{equation}\begin{cases}\begin{aligned}\label{eq euler bounded domain}
\partial_t\om_{\e}+\left(\nabla^{\perp} (\Delta)^{-1}\om_{\e}\right)\cdot \om_{\e} &=0 \,\,\,\,\,\,\,\quad \text{in} \quad\Omega\times [0,+\infty) \\
\om_{\e}(\cdot,0)&=\om_{0,\e} \quad \text{in} \quad\Omega,
\end{aligned}\end{cases}\end{equation}
where
\begin{equation*}
(-\Delta)^{-1}f(x):=\int_{\Omega} G(x,y)f(y)\,dy
\end{equation*}
and
\begin{equation*}
G(x,y)=\frac{1}{2\pi}\log \frac{1}{|x-y|} +H(x,y)
\end{equation*}
is the Green's function on $\Omega$. We are interested in the initial vorticity of the form
$$\om_{0,\e}(x) \approx \sum_{m=1}^M \g_m \delta_{p_m^0},$$
where $\g_1, ..., \g_M$ are $M$ nonzero real numbers and $p^0_1, ... , p^0_M \in \Omega$ are $M$ distinct points. A formal computation suggests that when
\begin{equation*}\om_0(x)=\sum_{m=1}^M \g_m \delta_{p_m^0},\end{equation*}
the solution $\om(x,t)$ of \eqref{eq euler bounded domain} should be
$$
\om(x,t)=\sum_{m=1}^M \g_m \delta_{p_m(t)},
$$
where $p(t)=\big(p_1(t), ..., p_M(t)\big):[0,T^*) \to (\Omega)^M$ solves the equation
\begin{equation*}\begin{cases}\begin{aligned}
\frac{dp_m}{dt}&=\sum_{l \neq m} \frac{\g_l}{2\pi} \frac{(p_m-p_l)^{\perp}}{|p_m-p_l|^2}-\sum_{l=1}^M \gamma_l \nabla_x^{\perp}H(p_m,p_l) \\
p_m(0)&=p_m^0.
\end{aligned}\end{cases}\end{equation*}
See \cite{MaJ}, \cite{MaP2} and \cite{Hel} for references. We aim to show that Theorem \ref{thm main} remains valid in bounded domains.
Let
\[
\om_{0,\e}(x)=\sum_{m=1}^M \om_{0m,\e}(x)+\om_{0p,\e}(x),
\]
then we have

\begin{theorem}\label{thm main bounded domain}
Assume $\om_{0,\e} \in L^{1} \cap L^{\infty}$ and $\om_{0m,\e}$ does not change sign with
\[\text{supp}(\om_{0m,\e}) \subset B_{\e}(p_m(0)).\]
 Let the $m^{th}$ total vorticity $\gamma_{m,\e}$ be $$\gamma_{m,\e}= \int_{\Omega} \om_{0m, \e} \,dx,$$
which satisfies
 \[
 |\gamma_m-\gamma_{m,\e}| \le CA_{\e}^{\frac12}.
 \]
We assume also that $A_{\e} \to 0$ as $\e \to 0$ and there exist some $\gamma >0$ , $p_1>2$ such that $$\| \om_{0m,\e}\|_{p_1} \le A_{\e}^{-\gamma}$$
for $m=1, ... , M$.
Then for any $T < T^*$ and any $a < \frac12$, there exists $\e_0 >0 $ such that for all $0 < \e \le \e_0$, the solution $\om_{\e}$ of \eqref{eq euler bounded domain} admits a decomposition $\om_{\e}=\sum_{m=1}^M \om_{m,\e}+\om_{p,\e}$, where
$$\text{supp}(\om_{m,\e}(\cdot,t))\subset B_{(A_{\e})^{a}}(p_m(t)),$$ $$ \int_{\Omega} \om_{m,\e}(x,t) \,dx=\gamma_{m,\e}$$
and
$$\|\om_{p,\e}\|_r=\|\om_{0p,\e}\|_r$$ for any $r \ge 1$.
\end{theorem}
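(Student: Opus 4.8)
The plan is to mirror the proof of Theorem \ref{thm main}, the only genuinely new feature being the regular part $H$ of the Green's function. Write the velocity generated by $\om_\e$ as
\[
u_\e(x,t)=\frac{1}{2\pi}\int_\Omega \frac{(x-y)^\perp}{|x-y|^2}\om_\e(y,t)\,dy+\int_\Omega \nabla_x^\perp H(x,y)\,\om_\e(y,t)\,dy=:u_\e^s+u_\e^r,
\]
so that $u_\e^s$ is exactly the whole-plane Biot--Savart velocity and $u_\e^r$ is the correction forced by the boundary. First I would record the analogue of Lemma \ref{lemma euler decomposition}: the map $u\mapsto\nabla^\perp(-\Delta)^{-1}u$ is linear in the vorticity, the flow of \eqref{eq euler bounded domain} is area preserving on $\Omega$, and the transport identity \eqref{eq transport invariance} still holds, so the decomposition $\om_\e=\sum_m\om_{m,\e}+\om_{p,\e}$ into transported pieces goes through verbatim and each $\om_{m,\e}(\cdot,t)$ is a rearrangement of $\om_{0m,\e}$. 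In particular $\|\om_{p,\e}\|_r=\|\om_{0p,\e}\|_r$ for every $r$, which is the last assertion of the theorem.

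Next I would cast each $\om_{m,\e}$ into the abstract form \eqref{eq single vortex}. I keep the singular self-interaction $\frac{1}{2\pi}\int\frac{(x-y)^\perp}{|x-y|^2}\om_{m,\e}(y,t)\,dy$ as the term $u_\e$ of Theorem \ref{thm section2 main}; I set
\[
F_{2,m,\e}(x,t):=\frac{1}{2\pi}\int\frac{(x-y)^\perp}{|x-y|^2}\om_{p,\e}(y,t)\,dy,
\]
the singular perturbation velocity, which by Lemma \ref{lemma control of the velocity} obeys $\|F_{2,m,\e}\|_\infty\lesssim A_\e\to0$; and I collect everything else --- the singular velocities from the cores $l\neq m$ together with the full regular field $u_\e^r$ --- into $F_{1,m,\e}$. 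The point is that every $H$-term is smooth: if the cores stay in a fixed compact $K\subset\subset\Omega$, then $H\in C^\infty(K\times K)$ and $u_\e^r$ is Lipschitz in $x$ on $K$ with constant $\lesssim\|H\|_{C^2(K\times K)}\sum_l\|\om_{0l,\e}\|_1$, uniformly in $t$; combined with Lemma \ref{lemma lipschitz property} for the far singular field, $F_{1,m,\e}$ is Lipschitz with a time-independent constant. Crucially, the two cancellation identities used in Theorem \ref{thm section2 main}, namely $\int u_\e\om_\e\,dx=0$ and $\int x\cdot u_\e\om_\e\,dx=0$, rely only on the antisymmetry of the singular kernel and $z\cdot K(z)=0$; since the regular self-velocity has been placed inside the Lipschitz field $F_{1,m,\e}$ rather than in $u_\e$, these identities survive unchanged. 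Thus the hypotheses of Theorem \ref{thm section2 main} and Theorem \ref{thm section main main} are verified, and the latter applies on $[0,T_\e]$ to give $\mathrm{supp}(\om_{m,\e}(\cdot,t))\subset B_{A_\e^b}(p_{m,\e}(t))$ for $a<b<\frac12$.

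The bootstrap then runs as in the proof of Theorem \ref{thm main}. Setting $G(t)=\sum_m|P_{m,\e}(t)-p_m(t)|$, I would compare
\[
\frac{d}{dt}P_{m,\e}=F_{1,m,\e}(P_{m,\e},t),\qquad \frac{dp_m}{dt}=\sum_{l\neq m}\gamma_l K(p_m-p_l)-\sum_{l=1}^M\gamma_l\nabla_x^\perp H(p_m,p_l).
\]
The singular interaction produces the same three contributions bounded by $C(G(t)+A_\e^b)$; the new regular terms are controlled by the mean value theorem using $\|H\|_{C^2(K\times K)}$, the concentration $\mathrm{supp}(\om_{l,\e})\subset B_{2A_\e^b}(P_{l,\e})$, and $|\gamma_{l,\e}-\gamma_l|\le CA_\e^{1/2}$, and hence also contribute $\lesssim G(t)+A_\e^b$. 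Gr\"onwall then yields $G(t)\lesssim A_\e^b$ and the inclusion \eqref{eq section3 main}, closing the bootstrap so that $T_\e=T$.

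I expect two points to demand the most care, one routine and one a genuine obstacle. The routine point is keeping the cores uniformly away from $\partial\Omega$: one must enlarge the separation constant to
\[
\delta:=\min\Big\{\min_{m\neq l}|p_m(t)-p_l(t)|,\ \min_m \mathrm{dist}\big(p_m(t),\partial\Omega\big)\ :\ 0\le t\le T\Big\}>0,
\]
so that all the $C^2(K\times K)$ bounds on $H$ are legitimate and uniform on $[0,T]$. The \emph{genuine} obstacle is the regular velocity produced by the perturbation, $\int_\Omega\nabla_x^\perp H(x,y)\,\om_{p,\e}(y,t)\,dy$: unlike the whole-plane case it does not fall under the small $L^\infty$ field $F_{2,m,\e}$ but enters $F_{1,m,\e}$, and it advects the core centers by an amount of size $\lesssim\|\om_{0p,\e}\|_1$ rather than $\lesssim A_\e$. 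To keep the cores tracking the Kirchhoff points $p_m(t)$ one therefore needs $\|\om_{0p,\e}\|_1$ to be small --- which holds in all the applications, e.g.\ the self-similar data of Proposition \ref{prop main1} where $\|\om_{0p,\e}\|_1\lesssim\e^{\beta\sigma}$ --- and this smallness must be incorporated into the hypotheses before the comparison estimate for $G(t)$ can be closed.
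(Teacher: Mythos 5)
Your overall architecture (transported decomposition, casting each core into the framework of Theorems \ref{thm section2 main} and \ref{thm section main main}, the Gr\"onwall comparison for $G(t)$, and enlarging $\delta$ to keep the cores away from $\partial\Omega$) matches the paper. But there is a genuine gap, and it is exactly the point you flag at the end: you place the perturbation's regular velocity $\int_\Omega \nabla_x^\perp H(x,y)\,\om_{p,\e}(y,t)\,dy$ inside the Lipschitz field $F_{1,m,\e}$, bound it only by $\|\om_{0p,\e}\|_1$, and then conclude that the theorem cannot be closed without adding the hypothesis that $\|\om_{0p,\e}\|_1$ is small. That conclusion is false: the theorem holds as stated, and the obstacle is an artifact of your splitting. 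The paper defines
\[
F_{2,m,\e}(x,t):=\nabla^{\perp}(\Delta)^{-1}\om_{p,\e}(x,t)=\int_{\Omega}\nabla_x^{\perp}G(x,y)\,\om_{p,\e}(y,t)\,dy,
\]
i.e.\ the \emph{entire} velocity generated by the perturbation, singular part and $H$-part together, and uses the Green's function estimate $|\nabla_x G(x,y)|\lesssim \frac{1}{|x-y|}$ (valid on a smooth bounded domain, cited from Yudovich) to run the proof of Lemma \ref{lemma control of the velocity} verbatim for this kernel. This yields
\[
\|F_{2,m,\e}\|_{\infty}\lesssim \|\om_{p,\e}\|_1^{\frac{q-2}{2q-2}}\|\om_{p,\e}\|_q^{\frac{q}{2q-2}}\lesssim A_{\e},
\]
so the $H$-contribution of the perturbation never enters the comparison with the Kirchhoff dynamics at all, and no smallness of $\|\om_{0p,\e}\|_1$ is required.

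The reason your bound is lossy is that you estimated $\nabla_x H$ merely as a bounded kernel on a compact set, for which only $\|\om_{0p,\e}\|_1$ is available. But $\nabla_x H(x,y)=\nabla_x G(x,y)+\frac{1}{2\pi}\nabla_x\log|x-y|$ is in fact $O\big(\tfrac{1}{|x-y|}\big)$, so the interpolation argument of Lemma \ref{lemma control of the velocity} applies to it just as to the singular part, and the whole perturbation field is $O(A_{\e})$ in $L^{\infty}$. With this correction, $F_{1,m,\e}$ should contain only the core-generated fields
\[
F_{1,m,\e}(x,t)=\sum_{l\neq m}\frac{1}{2\pi}\int_{\Omega}\frac{(x-y)^{\perp}}{|x-y|^2}\om_{l,\e}(y,t)\,dy-\sum_{l=1}^M\int_{\Omega}\nabla_x^{\perp}H(x,y)\,\om_{l,\e}(y,t)\,dy,
\]
which is Lipschitz near the cores with time-independent constant (as you argue), and the rest of your bootstrap then closes under the stated hypotheses, in agreement with the paper's proof.
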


\begin{proof} The proof is extremely similar to that of Theorem \ref{thm main}, so we only give the ideas here. First, the solution $\om_{\e}(x,t)$ admits a decomposition
\[
\om_{\e}(x,t)=\sum_{m=1}^M \om_{m,\e}(x,t)+\om_{p,\e}(x,t).
\]
As in Section 3, $\om_{p,\e}$ is a small perturbation term and $\om_{m,\e}$ satisfies
\[\begin{cases}\begin{aligned}
\partial_{t}\om_{m,\e}+(u_{m,\e}+F_{1,m,\e}+F_{2,m,\e})\cdot \om_{m,\e}&=0 \\
\om_{m,\e}(\cdot,0)&=\om_{0m,\e},
\end{aligned}\end{cases}\]
where
\[\begin{aligned}
u_{m,\e}(x,t)&:=\frac{1}{2\pi}\int_{\Omega} \frac{(x-y)^{\perp}}{|x-y|^2}\om_{\e,m}(y,t)\,dy, \\
F_{2,m,\e}(x,t)&:=\nabla^{\perp} (\Delta)^{-1}\om_{p,\e}(x,t)
\end{aligned}\]
and
\[
F_{1,m,\e}(x,t):=\sum_{l\neq m}\frac{1}{2\pi}\int_{\Omega}\frac{(x-y)^{\perp}}{|x-y|^2}\om_{l,\e}(y,t)\,dy-\sum_{l=1}^M\int_{\Omega}\nabla^{\perp}H(x,y)\om_{l,\e}(y,t)\,dy.
\]
From \cite{Yud} we see that $|\nabla_xG(x,y)|\lesssim \frac{1}{|x-y|}$, so $F_{2,m,\e}$ remains small as a consequence of Lemma \ref{lemma control of the velocity}. We also note that $F_{1,m,\e}$ is Lipschitz continuous in the support of $\om_{m,\e}$ if $\om_{m,\e}(\cdot,t)$ is supported near $p_m(t)$. So arguing as in Section 2-3 we get the desired conclusion.
\end{proof}
By the same argument as in Section 2-4, Theorem \ref{thm concentration of mass} and Theorem \ref{thm long time} remain valid in bounded domains.

\end{document}